\DeclareMathOperator{\lcm}{lcm}
\numberwithin{equation}{section} 
\theoremstyle{theorem} 
\newtheorem{theorem}{Theorem}[section]
\newtheorem{corollary}{Corollary}[section]
\newtheorem{lemma}{Lemma}[section]
\theoremstyle{definition}
\newtheorem{remark}[theorem]{Remark}
\title[Character groups of metacyclic groups]{Character groups of metacyclic groups} 
\author[J.A. Stevenson]{Justin A. Stevenson}
\address{Department of Mathematics\\
Iowa State University\\
Ames, Iowa 50011, U.S.A.}
\email{jas1@iastate.edu}
\begin{document}

\providecommand{\keywords}[1]
{
  \small	
  \textbf{\textit{Keywords---}} #1
}
 
\begin{abstract} 
Pontryagin duality provides a powerful tool for analyzing the structure and properties of locally compact abelian groups, in particular finite abelian groups. Now much can be done with non-abelian groups via the construction of character groups. In this paper we provide a set of conditions a group must satisfy to be realized as a balanced character group for a metacyclic group. Furthermore we provide a classification for the balanced character groups of a particular class of metacyclic groups, the dicyclic groups.
\end{abstract} 

\keywords{metacyclic group, character ring, Pontryagin duality, weighted quasigroup}

\maketitle


\section{Introduction}

The dual group of a locally compact abelian topological group $A$ is defined as the set of continuous homomorphisms from $A$ to the circle group, the multiplicative group of complex numbers of unit modulus. In other words it is the collection $\widehat{A}$ of all complex-valued continuous characters on $A$. The group structure is given by pointwise multiplication of characters, with the identity being the trivial character sending everything to $1$. \emph{Pontryagin duality} states that the double dual group is naturally isomorphic to the original group. 

Now we are able to construct an analogue of the character group for not necessarily abelian finite groups. A construction was given in \cite{Smith2020} to endow the collection of irreducible characters of a finite group with the structure of a weighted quasigroup, which, as shown by Hilton in \cite{HiltonWojciechowski}, can be lifted to a quasigroup, a \emph{character quasigroup} of the finite group. Under this definition every finite abelian group has a unique lift, namely itself. This observation underlies our view that character quasigroups (especially when they are associative \emph{character groups}) provide a generalization of the dual group to the non-abelian case.

In a previous paper, the character groups of the dihedral and generalized quaternion groups were studied \cite{StevensonSmith}. The goal of the current paper is to develop the theory within a more general class of groups, the metacylic groups. A group is said to be \emph{metacyclic} if it has a cyclic normal subgroup such that the quotient group is also a cyclic group. Equivalently, a group $G$ is metacyclic if and only if there exist two cyclic subgroups $N$ and $K$, with $N$ normal in $G$, such that $G\cong NK$. 

Let $x\in G$ be such that $N=\braket{x}\cong \mathbb{Z}/_n$ with isomorphism given by $x^a\mapsto n\mathbb{Z} + a$, and let $G/N=\braket{yN}$ for some $y\in G$. Assuming that the order of $G/N$ is $m$, we have $y^m\in N$, so $y^m=x^l$ for some integer $l$. Additionally, since $N$ is normal, we must have $x^y=x^k$ for some integer $k$ such that $\gcd(n,k)=1$ and the congruences $l(k-1) \equiv 0\mod n$ and $k^m\equiv 1 \mod n$ hold. The second condition results from the following argument:
$$ x^{l(k-1)}=x^{lk}x^{-l}=(x^y)^lx^{-l}=(x^l)^yx^{-l}=(y^m)^yx^{-l}=y^mx^{-l}=1\,, $$ 
and the computation
$$ x = x^{-l}xx^l = y^{-m}xy^m = y^{-m}y^mx^{k^m} = x^{k^m}$$
justisfies the last condition.

It was shown by H\"{o}lder \cite{Zassenhaus} that a group $G$ is metacyclic if and only if it has a presentation 
\begin{equation}
G=\braket{x,y \ | \ y^m=x^l, x^n=1, x^y=x^k} 
\end{equation}
where $\gcd(n,k)=1$ and the congruences $k^m\equiv 1 \mod n$ and $l(k-1) \equiv 0 \mod n$ hold. In this case, we will denote the metacyclic group with parameters $k,l,m,n$ by $M(k,l,m,n)$. One must be careful though, as these parameters are not invariants for the group: It is possible for two metacyclic groups with different parameters to be isomorphic. Additionally, it is worth noting that $m$ need not be the smallest integer such that $k^m\equiv 1\mod n$. The metacyclic group
$$ \braket{x,y | x^3=1, y^4=1, x^y=x^2}$$
provides an example. It is not isomorphic to any metacyclic group with parameter $m=2$.

\section{Quasigroups, weighted quasigroups and character quasigroups}

\subsection{Quasigroups}

A \emph{quasigroup} is a pair $(Q,\cdot)$ where $Q$ is a set and $\cdot$ is a binary operation such that for all $a,b\in Q$, there exist unique solutions $x,y\in Q$ to the equations
\begin{equation}\label{E:lrdiveqn}
a\cdot x=b
\quad
\mbox{ and }
\quad
y\cdot a=b\,.
\end{equation}
Small finite quasigroups are conveniently displayed by their Cayley or multiplication tables. The above condition requires every element of $Q$ to occur in each row and column exactly one time. The body of the Cayley table of a finite quasigroup defines a Latin square, and conversely each Latin square is the body of the Cayley table of some finite quasigroup. 

This definition of a quasigroup is combinatorial. Algebraically speaking, we may define a quasigroup as a tuple $(Q,\cdot, / , \backslash)$ where $Q$ is a set and $\cdot, /,\backslash$ are all binary operations, called the \emph{multiplication}, \emph{left division} and \emph{right division} respectively, such that: 
\begin{align*}
a \cdot (a \backslash b) &= b\,; 
\quad(a/b)\cdot b = a\,;
\\
a\backslash(a\cdot b)&=b\,;
\quad(a\cdot b)/b = a\,.
\end{align*}
Using this notation, we may write the solutions to equations \eqref{E:lrdiveqn} as $x=a\backslash b$ and $y=a/b$. For further reading and background on the theory of quasigroups, see \cite{SmithBook}.

\subsection{Weighted quasigroups}

The definition of a quasigroup can be generalized by allowing the base set $Q$ to be a multiset. Concretely, we say that a \emph{weighted quasgroup} is a triple $(X,w,\alpha)$ where $X$ is a set, equipped with a \emph{weight function} $w:X\rightarrow \mathbb{N}$ and a \emph{multiplication function}
$$
\alpha\colon X\times X\times X\to\mathbb N;
(x,y,z)\mapsto\alpha_z(x,y)
$$ 
such that 
 \begin{equation}\label{weight}
 \sum_{z\in X} \alpha_z(x,y) = \sum_{z\in X} \alpha_x(z,y) = \sum_{z\in X} \alpha_y(x,z) = w(x)w(y)
 \end{equation}
for all $x,y\in X$.
This definition appears in \cite{Hilton,HiltonWojciechowski}. The multiplication function may also be displayed in a Cayley table format:
\begin{equation}\label{E:WtdQgpGn}
\begin{tabular}{c||ccc} 
$*$ &\dots&$y$&\dots\\
\hline	
\hline
$\vdots$&&$\vdots$&\\
$x$&\dots&$\sum_{z\in X}\alpha_z(x,y)z$&\dots\\
$\vdots$&&$\vdots$&\\
\end{tabular}
\end{equation}
We define the \emph{gross weight} of a weighted quasigroup $(X,w,\alpha)$ to be $\sum_{z\in X}w(z)$. A quasigroup $(Q,\cdot)$ of finite order $n$ may be regarded as a weighted quasigroup of gross weight $n$, with $w(x)=1$ and $\alpha(x,y,z)=\delta_{x\cdot y,z}$ for $x,y,z\in Q$. In this case, the conditions of \eqref{weight} amount to the Latin square conditions that each symbol from $Q$ appears exactly once in each row and column of the body of the multiplication table of $Q$.

\subsection{Covering weighted quasigroups with quasigroups}\label{SS:CoWeQuQu}

A quasigroup $Q$ of finite order $n$ is said to \emph{cover} a weighted quasigroup $(X,w,\alpha)$ of gross weight $n$ if there is a surjective function $f\colon Q\to X$ such that
$$
w(x)=\big|f^{-1}\Set{x}\big|
$$
and
$$
\alpha(x,y,z)=
\big|\Set{(a,b)\in f^{-1}\Set{x}\times f^{-1}\Set{y}|f(a\cdot b)=z}\big|
$$
for all $x,y,z\in X$. We will also say that $(X,w,\alpha)$ \emph{lifts} to $Q$ if such a covering exists. The function $f$ is called a \emph{covering} or \emph{lifting}.

The following theorem was presented in the given form as \cite[Th.~5.3]{Smith2020}.
\begin{theorem}
Each weighted quasigroup 
of gross weight $n$ lifts to a quasigroup 
of order $n$.
\end{theorem}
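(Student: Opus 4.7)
The plan is to construct the covering quasigroup by induction on the defect $d = n - |X| = \sum_{x \in X}(w(x) - 1)$. In the base case $d=0$ all weights equal $1$; condition \eqref{weight} then forces $\alpha_z(x,y) \in \{0,1\}$ with a single nonzero $z$ in each cell, and the row and column marginals give exactly the Latin square property, so $X$ itself is already a quasigroup of order $n$ that covers $(X,w,\alpha)$ via the identity map.

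For the inductive step, choose $x_0 \in X$ with $w(x_0) \geq 2$ and replace it by two new symbols $x_0'$, $x_0''$ of weights $1$ and $w(x_0)-1$. Setting $X^* = (X \setminus \{x_0\}) \cup \{x_0', x_0''\}$ preserves the gross weight but strictly decreases the defect. The inductive hypothesis applied to any weighted quasigroup structure $(X^*, w^*, \alpha^*)$ I manage to build will produce a quasigroup $Q^*$ of order $n$ covering it, and post-composing that covering with the natural collapse $X^* \to X$ that sends both $x_0'$ and $x_0''$ back to $x_0$ will then supply the required covering of $(X,w,\alpha)$.

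The crux is therefore constructing a valid $\alpha^*$. Every entry of $\alpha$ in which $x_0$ occupies one of the three slots must be split in two; doubly $x_0$-indexed entries such as $\alpha_{x_0}(x_0,y)$ split in four; and the triply indexed entry $\alpha_{x_0}(x_0,x_0)$ splits in eight. The whole collection must be chosen consistently so that the three marginal identities of \eqref{weight} persist for $(X^*,w^*,\alpha^*)$, specialized on $x_0'$ with weight $1$ and on $x_0''$ with weight $w(x_0)-1$. I would frame this as a simultaneous integer transportation / feasibility problem whose demands are dictated exactly by the marginals in \eqref{weight}, and establish solvability by Hall's marriage theorem applied to a suitable bipartite graph (or equivalently, by integral max-flow on an appropriate network); the nonnegativity and divisibility required for integrality are exactly what \eqref{weight} guarantees.

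I expect this splitting step to be the principal obstacle. Once it is handled, the induction runs cleanly and reduces after at most $d$ steps to the base case. Alternatively, one can bypass the induction altogether by recognizing $(X, w, \alpha)$ as an outline Latin square in the sense of Hilton and invoking the disamalgamation theorem cited in the excerpt as \cite{HiltonWojciechowski}.
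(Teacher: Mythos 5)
First, a point of reference: the paper does not prove this theorem. It is quoted verbatim from \cite[Th.~5.3]{Smith2020} and attributed to Hilton \cite{HiltonSchool,Hilton,HiltonWojciechowski}, with only the remark that the known proofs are combinatorial and build up the Latin square ``stepwise by row, column, and symbol.'' Your closing alternative --- view $(X,w,\alpha)$ as an outline Latin square and invoke the disamalgamation theorem of \cite{HiltonWojciechowski} --- therefore coincides exactly with the paper's treatment and is the acceptable route here.

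Your primary argument, however, has a genuine gap at the step you yourself identify as the principal obstacle, and the tools you invoke do not close it. When you split $x_0$ into $x_0'$ and $x_0''$, the element must be split \emph{simultaneously} in all three roles it plays in \eqref{weight}: as a row index, as a column index, and as a symbol. The unknowns $\alpha^*_z(x,y)$ therefore carry three coupled coordinates, and the feasibility problem is a three-index transportation problem, not a bipartite one; there is no single bipartite graph on which Hall's condition encodes all three marginal families at once, and three-dimensional transportation polytopes are not integral in general, so the evident fractional solution obtained by scaling $\alpha$ does not automatically round to an integer one. The solvability of this simultaneous split is essentially the entire content of the theorem. This is precisely why the published proofs do not perform the split in one shot: they detach rows, columns, and symbols in separate stages, passing through intermediate objects (outline rectangles and the like) that are \emph{not} weighted quasigroups in the sense of the present definition, because the three roles carry different amalgamations at intermediate stages; each stage is then a genuine two-index problem settled by equitable edge-colouring results for bipartite multigraphs, which is stronger than a bare Hall/max-flow existence statement. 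Consequently your induction cannot even be run as stated --- the inductive hypothesis does not apply to the intermediate objects --- and would first have to be generalized to structures with three independent weight functions before the stage-by-stage reduction closes up.
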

The original formulation appeared in \cite{HiltonSchool}, and then again in different language as \cite[Th.~1]{Hilton}, \cite[Th.~1]{HiltonWojciechowski}. The proofs were purely combinatorial, relying on regularity results from graph theory to build up a Latin square stepwise by row, column, and symbol.

\subsection{Weighted character quasigroups}\label{SS:WeChrQgp}

Suppose that $G$ is a group of finite order $n$, with set $\widetilde\Gamma(G)$ or $\widetilde\Gamma=\Set{\theta_1,\dots,\theta_t}$ of irreducible characters.\footnote{The notation $\widetilde\Gamma$ refers to the dual of the conjugacy class association scheme $\Gamma$ of $G$, as discussed in \cite[Exs.~2.2, 3.1]{Bannai} \cite[Ex.~2.1(2),\S2.5]{BanIto}, \cite[\S2.2.3]{Smith2020}.} Then a weighted quasigroup structure is defined on $\widetilde\Gamma$ by
\begin{equation}\label{E:weight}
w\colon\widetilde\Gamma\to\mathbb N;\theta_i \mapsto \theta_i(1)^2 
\end{equation}
and
\begin{equation}\label{def}
\alpha\colon\widetilde\Gamma^3\to\mathbb N;(\theta_i,\theta_j,\theta_k)\mapsto \theta_i(1)\theta_j(1)\theta_k(1)\braket{\theta_i\cdot\theta_j|\theta_k} 
\end{equation}
with scalar product
\begin{equation}\label{inner prod}
\braket{\theta|\varphi}=\frac1n\sum_{g\in G}\theta(g)\overline{\varphi(g)} 
\end{equation}
for functions $\theta,\varphi\colon G\to\mathbb C$
\cite[Cor.~5.10]{Smith2020}. The structure $(\widetilde\Gamma,w,\alpha)$ is called the \emph{weighted character quasigroup} of $G$. Its gross weight 
is $\sum_{\theta_i\in\widetilde\Gamma}\theta_i(1)^2=n$.

\subsection{Character quasigroups}

Specializing the conditions of \S\ref{SS:CoWeQuQu}, the weighted character quasigroup lifts to a quasigroup $Q$ if there exists a surjection $f\colon Q\to\widetilde{\Gamma}$ such that $\theta_i(1)^2=|f^{-1}\{\theta_i\}|$ for $\theta_i\in\widetilde{\Gamma}$ and 
\begin{equation}\label{alpha}
\alpha(\theta_i,\theta_j,\theta_l)=
\big|\Set{(a,b)\in f^{-1}\{\theta_i\}\times f^{-1}\{\theta_j\}|f(a\cdot b)=\theta_l}\big|
\end{equation}
for all $\theta_i,\theta_j,\theta_l\in\widetilde{\Gamma}$. A quasigroup cover $Q$ of $\big(\widetilde{\Gamma}(G),w, \alpha\big)$ is called a \emph{character quasigroup} of $G$, and a \emph{character group} of $G$ if the cover is itself a group.

By \cite[Th.~5.11]{Smith2020}, character quasigroups are guaranteed to exist for any finite group $G$. However, it still unknown if a character group always exists. The current paper seeks to answer this question in the affirmative for the metacyclic groups.

\section{Characters of metacyclic groups}

Recall that a metacyclic group is a group with the following presentation 
$$ M(k,l,m,n)=\braket{x,y \ | \ y^m=x^l, x^n=1, x^y=x^k}\,,$$
where where $\gcd(n,k)=1$ and the congruences $k^m\equiv 1 \mod n$ and $l(k-1) \equiv 0 \mod n$ hold. The character theory of the metacyclic groups is well known: see \cite[Theorem~5.1]{Munkholm}, for example. The group $K:=\braket{y}$ acts on $\mathbb{Z}/_n$ via $y\cdot (n\mathbb{Z}+1) = n\mathbb{Z}+k$, under which $y^m$ acts trivially. The orbit and stabilizer of an element $n\mathbb{Z}+a$ will be denoted as $Ka$ and $K_a$ respectively. Let $I$ be a set of distinct representatives for the orbits of this action. Note that the size of the orbit $Ka$ is equivalently the smallest integer such that $ak^{|Ka|} \equiv a \mod n$, which we will often denote by $r_a$. Similarly we will denote $|K_a|$ by $s_a$.  

For each $a\in I$ we let $\Set{q_{a,1},\dots, q_{a,s_a}}$ be a full set of integers, pairwise incongruent modulo $|K_a|$ with $-\frac{al}{n}\leq q_{a,\alpha}<m-\frac{al}{n}$ for integers $\alpha$ in the range $1\le\alpha\le s_a$, and set 
\begin{equation*}
e_{a,\alpha}=\frac{al+nq_{a,\alpha}}{\gcd(n,l)}\,.
\end{equation*}
For concreteness, we make the canonical choice of $$q_{a,\alpha}=\alpha-\lfloor {al}/{n} \rfloor\,,$$ and thus obtain the integers
\begin{equation}\label{E:ea,alpha}
e_{a,\alpha}=\frac{n\alpha+al-n\lfloor \frac{al}{n} \rfloor}{\gcd(n,l)}= \frac{n}{\gcd(n,l)}(\alpha+\{al/n \})
\end{equation}
for $a\in I$ and $1\le\alpha\le s_a$, where $\{x \}$ denotes the fractional part of $x$. In particular, the relation
\begin{equation}\label{E:frptalnZ}
\frac n{\gcd(n,l)}\bigg\{\frac{al}n\bigg\}\in\mathbb Z
\end{equation}
holds.

Next, let $\omega$ be a primitive $n$-th root of unity, and let $\eta$ be a primitive $\bigg(\dfrac{nm}{\gcd(n,l)}\bigg)$-th root of unity, such that $\omega^{\gcd(n,l)}=\eta^m$. The characters $\theta_{a,\alpha}$ are all induced from the linear characters $\chi_{a,\alpha}\colon G_a\rightarrow \mathbb{C}$ with $G_a=\braket{y^{r_a},x}$, where
$
\chi_{a,\alpha}(y^{r_a})=
\eta^{r_ae_{a,\alpha}}
$
and
$
\chi_{a,\alpha}(x)=\omega^a 
$
\cite{Munkholm}.
For each $a\in I$ and $0\leq \alpha < r_a$, the specification  
$$
 \theta_{a,\alpha}(y^jx^i)=
  \begin{cases}
\eta^{je_{a,\alpha}}\sum_{s\in Ka} \omega^{si} & \mbox{if } j=r_aj' \mbox{ for some } 0\leq j'<s_a \\
0 & \mbox{otherwise} \\
 \end{cases}
$$
defines the character of an irreducible representation of $G$ of dimension $r_a$.

\subsection{Products of characters}\label{SS:ProdChar}

For $a,b,c\in I$, define
$$
S(a,b,c):= |\Set{(r,s)\in Ka\times Kb :r+s\equiv c \mod n}| \,.
$$
The motivation for this definition of $S(a,b,c)$ is the following identity, which will appear in our calculation of the product of two characters:
\begin{equation}
 \sum_{i=0}^{n-1} \sum_{s\in Ka} \sum_{r\in Kb} \sum_{t\in Kc} \omega^{(s+r-t)i}=
 nr_c S(a,b,c) \,.
\end{equation}
The identity follows from the easily verified fact that $S(a,b,c)=S(a,b,ck^t)$ for any choice of $1\leq t\leq r_c$.

\begin{lemma}\label{RootOf1Lem}
Let $a,b,c$ in $I$ be such that $S(a,b,c)\neq 0$, and let $0\leq \alpha <r_c, 0\leq \beta<r_b$ and $0\leq \gamma <r_c$. Then the congruence
$$ 
e_{a,\alpha}+e_{b,\beta}-e_{c,\gamma} \equiv
 0 \mod \frac{n}{\gcd(n,l)} 
 $$
 holds, using the notation of \eqref{E:ea,alpha}.
\end{lemma}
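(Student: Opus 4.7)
The plan is to reduce everything to a divisibility condition on $(a+b-c)l$ modulo $n$, and then exploit the defining metacyclic relation $l(k-1)\equiv 0 \bmod n$ together with the hypothesis $S(a,b,c)\neq 0$.

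First, I would substitute the explicit formula \eqref{E:ea,alpha}. Writing $e_{a,\alpha}=\frac{n}{\gcd(n,l)}\alpha+\frac{n}{\gcd(n,l)}\{al/n\}$ and similarly for $\beta,\gamma$, observe that the integer multiples $\frac{n}{\gcd(n,l)}\alpha$, $\frac{n}{\gcd(n,l)}\beta$, $-\frac{n}{\gcd(n,l)}\gamma$ are already divisible by $\frac{n}{\gcd(n,l)}$ and so vanish modulo $\frac{n}{\gcd(n,l)}$. By \eqref{E:frptalnZ}, each term $\frac{n}{\gcd(n,l)}\{al/n\}$, $\frac{n}{\gcd(n,l)}\{bl/n\}$, $\frac{n}{\gcd(n,l)}\{cl/n\}$ is an integer. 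Thus the claim reduces to showing that
\[
\frac{n}{\gcd(n,l)}\left(\{al/n\}+\{bl/n\}-\{cl/n\}\right)\equiv 0 \pmod{\frac{n}{\gcd(n,l)}},
\]
which (expanding each fractional part as $al/n-\lfloor al/n\rfloor$ etc.) is equivalent to the single integrality condition $(a+b-c)l\equiv 0\pmod n$.

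Next I would bring in the hypothesis $S(a,b,c)\neq 0$: there exist nonnegative integers $i,j$ such that $r=ak^i\in Ka$ and $s=bk^j\in Kb$ satisfy $ak^i+bk^j\equiv c\pmod n$. Multiplying by $l$ gives $(ak^i+bk^j-c)l\equiv 0\pmod n$. The goal is therefore to pass from this congruence to $(a+b-c)l\equiv 0\pmod n$, which amounts to proving $a(k^i-1)l\equiv 0$ and $b(k^j-1)l\equiv 0$ modulo $n$.

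The key input here is the presentation relation $l(k-1)\equiv 0\pmod n$. Since
\[
l(k^i-1)=l(k-1)(k^{i-1}+k^{i-2}+\cdots+1),
\]
induction (or this single factorization) shows $l(k^i-1)\equiv 0\pmod n$ for every $i\geq 0$, and likewise for $j$. Consequently $a(k^i-1)l$ and $b(k^j-1)l$ both vanish mod $n$, giving $(a+b-c)l\equiv (ak^i+bk^j-c)l\equiv 0\pmod n$ as required. The only step that genuinely uses the metacyclic structure is this last one, leveraging $l(k-1)\equiv 0\pmod n$; everything else is bookkeeping around the explicit formula for $e_{a,\alpha}$ and the definition of $S(a,b,c)$.
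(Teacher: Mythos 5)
Your proof is correct and follows essentially the same route as the paper: both reduce the claim to the congruence $(a+b-c)l\equiv 0\pmod n$ and derive that from $S(a,b,c)\neq 0$ together with the relation $l(k-1)\equiv 0\pmod n$ (via $l(k^i-1)=l(k-1)(k^{i-1}+\cdots+1)$). The only cosmetic difference is that you do the bookkeeping through the fractional-part form of \eqref{E:ea,alpha} (incidentally re-deriving Corollary~\ref{C:RootOf1Lem}) while the paper works directly with the $q_{a,\alpha}$ representation; and you spell out the factorization step that the paper leaves implicit.
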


\begin{proof}
Since $a,b,c\in I$ are chosen such that $S(a,b,c)\neq 0$, by definition we have that $c=ak^r+bk^s$ for some $0\leq r<r_a$ and $0\leq s<r_b$, which means that the congruence
\begin{align*}
l\cdot (a+b-c) & = l\cdot (a(1-k^r)+b(1-k^s)) \\
&= a\cdot l(1-k^r)+b\cdot l(1-k^s) 
\equiv 0 \mod n
\end{align*}
holds.
Furthermore we have 
\begin{align*}
e_{a,\alpha}+e_{b,\beta}-e_{c,\gamma} &= \frac{al+nq_{a,\alpha}}{\gcd(n,l)}+ \frac{bl+nq_{b,\beta}}{\gcd(n,l)}- \frac{cl+nq_{c,\gamma}}{\gcd(n,l)} \\
&= \frac{l\cdot (a+b-c)+n\cdot (q_{a,\alpha}+q_{b,\beta}-q_{c,\gamma})}{\gcd(n,l)}
\equiv 0 \mod \frac{n}{\gcd(n,l)} \,,
\end{align*}
finishing our claim. 
\end{proof}

\begin{corollary}\label{C:RootOf1Lem}
Let $a,b,c$ in $I$ be such that $S(a,b,c)\neq 0$. Then
$$
\bigg\{\frac{al}n\bigg\}
+
\bigg\{\frac{bl}n\bigg\}
-
\bigg\{\frac{cl}n\bigg\}
$$
is an integer.
\end{corollary}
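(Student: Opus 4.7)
The plan is to read the corollary as an immediate algebraic consequence of Lemma~\ref{RootOf1Lem}, obtained by substituting the closed-form expression \eqref{E:ea,alpha} for $e_{a,\alpha}$ into the congruence and then dividing out the common factor $n/\gcd(n,l)$. The fractional parts $\{al/n\}$, $\{bl/n\}$, $\{cl/n\}$ already appear in that closed form, so no further manipulation is needed.

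Concretely, I would first fix any admissible indices, say $\alpha=\beta=\gamma=0$ (or any other legal triple; the choice will not affect the conclusion, since the difference between two admissible indices is itself an integer). By \eqref{E:ea,alpha} I then write
\[
e_{a,\alpha}+e_{b,\beta}-e_{c,\gamma}
=\frac{n}{\gcd(n,l)}\Bigl[(\alpha+\beta-\gamma)+\bigl(\{al/n\}+\{bl/n\}-\{cl/n\}\bigr)\Bigr].
\]
Lemma~\ref{RootOf1Lem} says the left-hand side is an integer multiple of $n/\gcd(n,l)$; cancelling that factor yields an integer $M$ with
\[
(\alpha+\beta-\gamma)+\bigl(\{al/n\}+\{bl/n\}-\{cl/n\}\bigr)=M.
\]
Since $\alpha,\beta,\gamma$ are integers, the remaining sum of fractional parts must itself be an integer.

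The only small subtlety is justifying that cancellation makes sense, i.e.\ that each individual quantity in brackets is at least rational with controllable denominator: this is exactly what \eqref{E:frptalnZ} guarantees, since it ensures $\frac{n}{\gcd(n,l)}\{al/n\}\in\mathbb Z$, so that rearranging to solve for the sum of fractional parts never leaves the integers. There is no genuine obstacle here; the corollary is really a direct translation of Lemma~\ref{RootOf1Lem} from the $e_{a,\alpha}$ language into the fractional-part language that will be more convenient in the product-of-characters computation to follow.
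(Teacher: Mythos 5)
Your proposal is correct and takes essentially the same route as the paper: both substitute the closed form \eqref{E:ea,alpha} into the congruence of Lemma~\ref{RootOf1Lem}, separate off the integer part $\alpha+\beta-\gamma$, and divide out the factor $n/\gcd(n,l)$, with \eqref{E:frptalnZ} guaranteeing integrality along the way. No further comment is needed.
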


\begin{proof}
By \eqref{E:frptalnZ}, 
$$
\frac n{\gcd(n,l)}
\bigg[
\bigg\{\frac{al}n\bigg\}
+
\bigg\{\frac{bl}n\bigg\}
-
\bigg\{\frac{cl}n\bigg\}
\bigg]
$$
is an integer. By \eqref{E:ea,alpha} and Lemma~\ref{RootOf1Lem}, we have 
$$
\frac n{\gcd(n,l)}
\bigg[
\bigg\{\frac{al}n\bigg\}
+
\bigg\{\frac{bl}n\bigg\}
-
\bigg\{\frac{cl}n\bigg\}
\bigg]
\equiv
 0 \mod \frac{n}{\gcd(n,l)}\,.
$$
The claim follows.
\end{proof}

\begin{lemma}
For any $a,b,c\in I$ and $0\leq \alpha <s_a, 0\leq \beta<s_b, 0\leq \gamma<r_c$. the specification
\begin{equation}\label{ProdCoeff}
\begin{cases} \dfrac{r_c S(a,b,c)}{\lcm(r_a,r_b)} & \text{if }  \alpha + \beta - \gamma \equiv \{al/n \}+\{bl/n \}-\{cl/n \}  \mod \gcd(s_a,s_b) \,,\\ 0 & otherwise \end{cases}
\end{equation}
gives the value of $\braket{\theta_{a,\alpha}\cdot \theta_{b,\beta},\theta_{c,\gamma}}$.
\end{lemma}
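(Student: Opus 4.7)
The plan is to compute
$$\braket{\theta_{a,\alpha}\cdot\theta_{b,\beta}|\theta_{c,\gamma}}=\frac{1}{nm}\sum_{j=0}^{m-1}\sum_{i=0}^{n-1}\theta_{a,\alpha}(y^jx^i)\,\theta_{b,\beta}(y^jx^i)\,\overline{\theta_{c,\gamma}(y^jx^i)}$$
directly from the explicit character values given above. Since $\theta_{a,\alpha}(y^jx^i)$ vanishes unless $r_a$ divides $j$, the summand is forced to be zero unless $R:=\lcm(r_a,r_b,r_c)$ divides $j$. On the surviving range each character factors as an $\eta$-power depending only on $j$ times an orbit sum $\sum_{s\in Ka}\omega^{si}$ depending only on $i$, so the double sum separates neatly into the product of a $j$-sum of $\eta$-powers and a multi-orbit $i$-sum.

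The $i$-sum is exactly the identity already highlighted in Section~\ref{SS:ProdChar}: summing $\omega^{(s+r-t)i}$ over $i=0,\dots,n-1$ selects the triples with $s+r\equiv t\pmod n$, and using the invariance $S(a,b,t)=S(a,b,c)$ for each $t\in Kc$ evaluates the resulting triple sum to $nr_cS(a,b,c)$. This handles one factor and reduces the whole problem to evaluating the $j$-sum.

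Writing $j=Rj''$ and setting $E=e_{a,\alpha}+e_{b,\beta}-e_{c,\gamma}$, the remaining sum is the geometric series $\sum_{j''=0}^{m/R-1}\zeta^{j''}$ with $\zeta=\eta^{RE}$. Lemma~\ref{RootOf1Lem} gives $\zeta^{m/R}=1$, so the series collapses to $m/R$ when $\zeta=1$ and to $0$ otherwise. Substituting the closed form \eqref{E:ea,alpha} for the $e_{a,\alpha}$ rewrites $RE$ as $\tfrac{nR}{\gcd(n,l)}F$ with
$$F=(\alpha+\beta-\gamma)+\bigl(\{al/n\}+\{bl/n\}-\{cl/n\}\bigr)\in\mathbb Z$$
(integrality being exactly Corollary~\ref{C:RootOf1Lem}), and the condition $\zeta=1$, namely $\tfrac{nm}{\gcd(n,l)}\mid RE$, is equivalent to $F\equiv 0\pmod{m/R}$, which upon rearrangement yields the congruence appearing in \eqref{ProdCoeff}.

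The essential clean-up is the identification $R=\lcm(r_a,r_b)$: whenever $S(a,b,c)\neq 0$ we have $c\equiv ak^r+bk^s\pmod n$ for some $r,s$, so $\lcm(r_a,r_b)$ already lies in the $K$-stabiliser of $c$, forcing $r_c\mid\lcm(r_a,r_b)$ and hence $R=\lcm(r_a,r_b)$. Consequently $m/R=\gcd(s_a,s_b)$, so combining the two factors and dividing by $|G|=nm$ produces $\tfrac{r_cS(a,b,c)}{\lcm(r_a,r_b)}$ precisely when the congruence on $F$ holds and $0$ otherwise. I expect the most delicate step to be the careful bookkeeping of the various orders of $\eta$, $\omega$ and $\zeta$ when verifying $\zeta^{m/R}=1$ and matching the modulus to $\gcd(s_a,s_b)$, but otherwise the argument is a direct character-table computation.
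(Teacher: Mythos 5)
Your proof is correct and follows essentially the same route as the paper's: expand the inner product directly, restrict to $j$ divisible by $\lcm(r_a,r_b)$, evaluate the $i$-sum via the $nr_cS(a,b,c)$ identity, and collapse the remaining geometric series using Lemma~\ref{RootOf1Lem}. Your explicit justification that $r_c\mid\lcm(r_a,r_b)$ whenever $S(a,b,c)\neq 0$ tidies up a step the paper leaves implicit, but it does not change the argument.
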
 
\begin{proof}
First notice that $\theta_{a,\alpha}\theta_{b,\beta}(y^jx^i)\neq 0$ if and only if $j=\lcm(r_a,r_b)j'$ for some $0\leq j' < \dfrac{m}{\lcm(r_a,r_b)}=\gcd(s_a,s_b)$. Using this we have $\langle \theta_{a,\alpha}\cdot \theta_{b,\beta},\theta_{c,\gamma} \rangle=$
\begin{align*}
 &\frac{1}{nm} \sum_{j=0}^{m-1}\sum_{i=0}^{n-1} (\theta_{a,\alpha}\cdot \theta_{b,\beta})(y^jx^i) \overline{\theta_{c,\gamma}(y^jx^i)} \\
&= \frac{1}{nm} \sum_{j=1}^{\gcd(s_a,s_b)}\sum_{i=0}^{n-1} (\theta_{a,\alpha}\cdot \theta_{b,\beta})(y^{\lcm(r_a,r_b)j}x^i)\overline{\theta_{c,\gamma}(y^{\lcm(r_a,r_b)j}x^i)} \\
&= \frac{1}{nm} \sum_{j=1}^{\gcd(s_a,s_b)} \eta^{\lcm(r_a,r_b)(e_{a,\alpha}+e_{b,\beta}-e_{c,\gamma})j} \underbrace{\sum_{i=0}^{n-1} \sum_{s\in Ka} \sum_{r\in Kb} \sum_{t\in Kc} \omega^{(r+s-t)i}}_{n r_c S(a,b,c)} \\
&= \frac{r_c S(a,b,c)}{m} \sum_{j=1}^{\gcd(s_a,s_b)} \eta^{\lcm(r_a,r_b)(e_{a,\alpha}+e_{b,\beta}-e_{c,\gamma})j} \\
 &= \begin{cases} \dfrac{r_cS(a,b,c)\gcd(s_a,s_b)}{m} & \text{if } \tfrac{\gcd(n,l)}{n} \cdot(e_{a,\alpha} + e_{b,\beta} - e_{c,\gamma}) \equiv 0 \mod \gcd(s_a,s_b) \,,\\ 0 & otherwise \end{cases} \\
  &= \begin{cases} \dfrac{r_cS(a,b,c)}{\lcm(r_a,r_b)} & \text{if } \alpha + \beta - \gamma \equiv \{al/n \}+\{bl/n \}-\{cl/n \} \mod \gcd(s_a,s_b)\,, \\ 0 & otherwise\,, \end{cases} 
\end{align*}
where the penultimate equality holds because $\eta^{\lcm(r_a,r_b)(e_{a,\alpha}+e_{b,\beta}-e_{c,\gamma})}$ is a $\gcd(s_a,s_b)$-th root of unity by Lemma \ref{RootOf1Lem}.
\end{proof}

Equation~\ref{ProdCoeff} tells us that the product of two characters may be written explicitly as  
$$\theta_{a,\alpha}\cdot \theta_{b,\beta}=\sum_{c\in I}\sum_{\gamma} \frac{r_c S(a,b,c)}{\lcm(r_a,r_b)} \theta_{c,\gamma} $$
for each $a,b\in I, 0\leq \alpha <s_a$ and $0\leq \beta <s_b$, where the second summation is over all $0\leq \gamma< s_c$ such that the congruence
\begin{equation}\label{GammaCond}
\alpha+\beta -\gamma \equiv \{al/n \}+\{bl/n \}-\{cl/n \} \mod \gcd(s_a,s_b)
\end{equation}
holds.

\begin{lemma}
For each $a,b\in I$ and $0\leq \alpha <s_a$ and $0\leq \beta <s_b$ 
\begin{equation}\label{ProdCoeff2}
\theta_{a,\alpha}\cdot \theta_{b,\beta}=\frac{r_a}{\lcm(r_a,r_b)}\sum_{s\in Kb} \sum_\gamma \theta_{a+s,\gamma}\,,
\end{equation}
where the second summation is over all $\gamma$ which satisfy \eqref{GammaCond}.
\end{lemma}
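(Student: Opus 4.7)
The plan is to derive \eqref{ProdCoeff2} from the preceding lemma by replacing the outer sum over orbit representatives $c\in I$, weighted by $r_c S(a,b,c)$, with a sum over $s\in Kb$, weighted by $r_a$. The whole argument reduces to a single combinatorial re-indexing built on a short orbit-counting bijection.

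First I would recall that, by the invariance $S(a,b,c)=S(a,b,ck^t)$ noted in Section~\ref{SS:ProdChar}, the integer $r_c S(a,b,c)=\sum_{c'\in Kc}S(a,b,c')$ equals the total number of pairs $(r,s)\in Ka\times Kb$ whose sum lies somewhere in the orbit $Kc$. Next I would establish the identity
\[ |\{(r,s)\in Ka\times Kb : r+s\in Kc\}| = r_a\cdot|\{s\in Kb : a+s\in Kc\}| \]
by factoring out $r_a$ as follows: for each $r=ak^i\in Ka$, the relation $r+s\in Kc$ is equivalent to $k^{-i}(r+s)=a+sk^{-i}\in Kc$, since $Kc$ is $K$-invariant; because $s\mapsto sk^{-i}$ is a bijection of $Kb$ onto itself, each of the $r_a$ choices of $r$ contributes precisely $|\{s\in Kb : a+s\in Kc\}|$ to the left-hand count.

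With this identity in hand, I would substitute into the formula of the preceding lemma and compute
\[ \sum_{c\in I} r_c S(a,b,c)\,\theta_{c,\gamma} \;=\; r_a\sum_{c\in I} |\{s\in Kb : a+s\in Kc\}|\,\theta_{c,\gamma} \;=\; r_a\sum_{s\in Kb}\theta_{a+s,\gamma}, \]
after identifying $\theta_{a+s,\gamma}$ with $\theta_{c,\gamma}$ for the unique $c\in I$ such that $a+s\in Kc$. This reindexing is legitimate because the construction of $\theta_{c,\gamma}$ above depends on $c$ only through its orbit $Kc$ and on $\gamma$ only modulo $s_c$. Dividing through by $\lcm(r_a,r_b)$ then yields \eqref{ProdCoeff2}.

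The only remaining point requiring care is that the congruence condition \eqref{GammaCond} on $\gamma$ is preserved under the substitution $c\leftrightarrow a+s$; that is, $\{cl/n\}=\{(a+s)l/n\}$ whenever $a+s\in Kc$. This is immediate from the defining congruence $l(k-1)\equiv 0\bmod n$, which via a telescoping argument gives $l(k^j-1)\equiv 0\bmod n$ for every $j$, and hence $c'l\equiv cl\bmod n$ for every $c'\in Kc$. The main (mild) obstacle is purely notational: one must interpret $\theta_{a+s,\gamma}$ consistently as an orbit-indexed character. Once that is pinned down, the computation is essentially bookkeeping.
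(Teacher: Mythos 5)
Your proof is correct and follows essentially the same route as the paper: both arguments reduce the lemma to the orbit-counting identity $r_c S(a,b,c) = r_a\,\lvert\{s\in Kb : a+s\in Kc\}\rvert$ together with the observation that \eqref{GammaCond} is invariant under change of orbit representative (via $lk\equiv l \bmod n$). Your explicit bijection $s\mapsto sk^{-i}$ is merely a cleaner packaging of the multiset-regrouping identity the paper uses to justify the same count.
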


\begin{proof}
Since $lk \equiv l \mod n$, it immediately follows that $lk^i\equiv l \mod n$ for any choice of $i$. Therefore, the value of $\{ak^il/n \}$ is independent of the choice of $i$. In other words \eqref{GammaCond} is unchanged by different choices of representatives for $a,b$ and $c$. Hence it suffices to show that for fixed $\gamma$, we have 
$$\sum_{c\in I} r_c S(a,b,c)\theta_{c,\gamma}=\sum_{s\in Kb} \tfrac{r_a}{r_{a+s}}\theta_{a+s,\gamma}\,.$$
To see why this is the case, recall that $S(a,b,c)\neq 0$ if and only if $c=a+bk^j$ for some $0\leq j<r_b$. Additionally, we have the following identity:
$$ \sum_{i=1}^{r_a}\sum_{j=1}^{r_b} (ak^i+bk^j)=\sum_{j=1}^{r_b} \sum_{i=1}^{r_a} (a+bk^{j-i})k^i= \sum_{j=1}^{r_b} \frac{r_a}{r_{a+bk^j}} \sum_{i=1}^{r_{a+bk^j}} (a+bk^j)k^i\,. $$ 
For fixed $0\leq r\leq r_b$, the number of times that any element of the form $(a+bk^r)k^s$ with $0\leq s\leq r_{a+bk^r}$ appears in the left hand side is exactly $r_{a+bk^r}S(a,b,c)$, while on the right hand side they appear $\sum_{j=1}^{r_b} {r_a}/{r_{a+bk^j}}$ many times. 
\end{proof}

\section{The character groups}

\subsection{Quasigroup semialgebras}\label{SS:QgpSmalg}

Let $Q$ be a quasigroup, say a character quasigroup for a finite group $G$. Let $\mathbb NQ$ be the free $\mathbb N$-semimodule over $Q$, with the product $\cdot$ defined by semilinear extension of the quasigroup multiplication on $Q$. In particular, if $Q$ is a group, then $\mathbb NQ$ forms the fragment of the integral group algebra where all the coefficients are nonnegative.

A multisubset $M$ of $Q$, where each element $q$ of $Q$ appears in $M$ with muliplicity $m_q$, is represented in $\mathbb NQ$ as 
$$
M=\sum_{q\in Q}m_qq\,.
$$ 
Additionally if $S$ and $T$ are multisubsets of $Q$, then $m_q(S,T)$ will denote the multiplicity of $q$ in the product $S\cdot T$.  

\begin{lemma}\label{EquivDef}
Let $(X,w,\alpha)$ be a weighted quasigroup. A quasigroup $Q$ covers $(X,w,\alpha)$ if and only if there are $|X|$ many disjoint subsets $\{S_x \}_{x\in X}$ such that $|S_x|=w(x)$ for each $x\in X$ and 
\begin{equation}\label{CovEq}
 \sum_{q\in S_z} m_q(S_x,S_y)=\alpha(x,y,z)
\end{equation} 
for each $x,y,z$, 
\end{lemma}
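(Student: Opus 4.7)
The plan is to observe that the lemma is essentially an unpacking of the definition of covering given in \S\ref{SS:CoWeQuQu}, with the subsets $S_x$ playing the role of the fibers $f^{-1}\{x\}$. The bijection between coverings $f\colon Q\to X$ and indexed families of disjoint subsets $\{S_x\}_{x\in X}$ of $Q$ with $|S_x|=w(x)$ is the fundamental bookkeeping device; once this correspondence is set up, the multiplication condition~\eqref{CovEq} translates directly into the defining equation for $\alpha(x,y,z)$.

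For the forward direction, I would suppose $f\colon Q\to X$ is a covering and set $S_x:=f^{-1}\{x\}$. Disjointness is automatic from the fact that $f$ is a function, and $|S_x|=w(x)$ is immediate from the definition of covering. To verify \eqref{CovEq}, note that
\[
\sum_{q\in S_z}m_q(S_x,S_y)
=\bigl|\{(a,b)\in S_x\times S_y\colon a\cdot b\in S_z\}\bigr|
=\bigl|\{(a,b)\in f^{-1}\{x\}\times f^{-1}\{y\}\colon f(a\cdot b)=z\}\bigr|,
\]
which equals $\alpha(x,y,z)$ by definition.

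For the reverse direction, assume we are given disjoint subsets $\{S_x\}_{x\in X}$ with $|S_x|=w(x)$ satisfying \eqref{CovEq}. Since $Q$ has order equal to the gross weight $n=\sum_{x\in X}w(x)$, and the $S_x$ are pairwise disjoint with total cardinality $\sum_{x\in X}|S_x|=\sum_{x\in X}w(x)=n=|Q|$, they must partition $Q$. This allows me to define $f\colon Q\to X$ by sending each $q\in Q$ to the unique $x\in X$ with $q\in S_x$; the map is surjective because $w(x)\geq 1$ for all $x\in X$, so each $S_x$ is nonempty. Then $f^{-1}\{x\}=S_x$, giving $|f^{-1}\{x\}|=w(x)$, and reversing the chain of equalities from the forward direction recovers the covering identity for $\alpha(x,y,z)$.

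This proof is mostly bookkeeping rather than containing a genuine obstacle; the only mildly subtle point is verifying that the disjoint family $\{S_x\}_{x\in X}$ automatically exhausts $Q$ in the reverse direction, which rests on the convention that $Q$ has order equal to the gross weight of the weighted quasigroup it covers.
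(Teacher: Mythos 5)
Your proof is correct and follows the same route as the paper, which simply notes that one passes between a covering $f$ and the family $\{S_x\}$ via $S_x=f^{-1}\{x\}$ in one direction and $f(q)=x$ for $q\in S_x$ in the other. You fill in the details the paper leaves implicit, including the worthwhile observation that the disjoint sets must exhaust $Q$ because $|Q|$ equals the gross weight.
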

\begin{proof}
Given the sets $S_x$, define $f:Q\rightarrow X$ by $S_x\mapsto x$, and conversely given the covering $f$ define $S_x:=f^{-1}(x)$.  
\end{proof}

Given a weighted quasigroup $(X,w,\alpha)$, the covering quasigroups can be difficult to work with in general, so we would like to restrict ourselves to studying a subset of the covering quasigroups which are more well-behaved. Let $Q$ be a  covering quasigroup $Q$, together with the sets $\{S_x\}_{x\in X}$ guaranteed by Lemma ~\ref{EquivDef}. We say $Q$ is a \emph{balanced covering} of $(X,w,\alpha)$ if $m_q(S_x,S_y)$ does not depend on the choice of $q\in S_z$ for all $x,y,z\in X$.

\begin{remark}
If one does not want to refer explicitly to the notation of Lemma~\ref{EquivDef} in the definition of a balanced covering, they may equivalently take it as a covering such that 
$$\forall\ x,y\in X\,,\ \alpha(x,y,z)=\alpha(x,y,z') $$
whenever $f^{-1}\{z \}=f^{-1}\{z' \}$. 
\end{remark}

Note that in a balanced covering 
\begin{equation}
m_q(S_x,S_y)=\frac{\alpha(x,y,z)}{w(z)} 
\end{equation} 
for all $x,y,z\in X$ and $q\in S_z$. In particular, when the covering is balanced, \eqref{CovEq} becomes 
\begin{align*}
S_x\cdot S_y &= \sum_{q\in Q} m_q(S_x,S_y)q 
= \sum_{z\in X}\sum_{q\in S_z} m_q(S_x,S_y) q \\
&= \sum_{z\in X}\sum_{q\in S_z} \frac{\alpha(x,y,z)}{w(z)}q 
= \sum_{z\in X} \frac{\alpha(x,y,z)}{w(z)}S_z \,.
\end{align*}
In other words, the sets $\set{S_x}_{x\in X}$ carry a nice algebraic structure when the covering is balanced. This property can be used to compute character groups, by building a presentation for them.

Let $G$ be a finite group with set $\tilde{\Gamma}(G)=\Set{ \varphi_1,\dots, \varphi_t }$ of irreducible characters,  satisfying 
$$ \varphi_i \cdot \varphi_j = \sum_{k=1}^t \langle \varphi_i \cdot \varphi_j , \varphi_k \rangle \varphi_k\,. $$ 
Summarizing the properties of balanced coverings of weighted quasigroups to the case where $Q$ is a character quasigroup, we have the following result. 

\begin{corollary}\label{CovChar}
A quasigroup $Q$ is a balanced cover of $G$ if and only if there exist subsets $\Set{S_i}_{1\leq i\leq t}$ of $Q$ such that $|S_i|=\varphi_i(1)^2$ and 
\begin{equation}
S_i\cdot S_j = \sum_{k=1}^t \left(\frac{\varphi_i(1)\varphi_j(1)\langle \varphi_i\cdot \varphi_j , \varphi_k \rangle}{\varphi_k(1)} \right) S_k
\end{equation}
for each $1\leq i,j\leq t$.
\end{corollary}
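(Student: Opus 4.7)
The plan is to specialize the general balanced-covering machinery developed in Section~\ref{SS:QgpSmalg} to the weighted character quasigroup $(\widetilde{\Gamma}(G), w, \alpha)$ from Section~\ref{SS:WeChrQgp}. The only ingredients I need are the specific values $w(\varphi_i) = \varphi_i(1)^2$ and $\alpha(\varphi_i, \varphi_j, \varphi_k) = \varphi_i(1)\varphi_j(1)\varphi_k(1)\langle \varphi_i \cdot \varphi_j \mid \varphi_k\rangle$ given by \eqref{E:weight} and \eqref{def}, together with Lemma~\ref{EquivDef} and the displayed identity
$$
S_x \cdot S_y = \sum_{z\in X} \frac{\alpha(x,y,z)}{w(z)}\, S_z
$$
that is derived in the text immediately before the statement.

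For the forward implication, I will assume $Q$ is a balanced cover of $G$. Lemma~\ref{EquivDef} then supplies pairwise disjoint subsets $\{S_i\}_{1\le i\le t}$ with $|S_i| = w(\varphi_i) = \varphi_i(1)^2$. Plugging the explicit $w$ and $\alpha$ into the boxed formula above gives
$$
S_i \cdot S_j = \sum_{k=1}^t \frac{\varphi_i(1)\varphi_j(1)\varphi_k(1)\langle \varphi_i\cdot \varphi_j\mid \varphi_k\rangle}{\varphi_k(1)^2}\, S_k,
$$
and the single cancellation of $\varphi_k(1)$ yields the claimed expression.

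For the converse, I will suppose that disjoint subsets $\{S_i\}$ of $Q$ with $|S_i|=\varphi_i(1)^2$ and the stated product formula exist, and define $f\colon Q\to\widetilde{\Gamma}$ by $f(q) = \varphi_i$ for $q\in S_i$; since $\sum_i \varphi_i(1)^2 = |G| = |Q|$, the family $\{S_i\}$ partitions $Q$ and $f$ is a well-defined surjection. The hypothesis is that in $\mathbb N Q$ the product $S_i\cdot S_j$ is an $\mathbb N$-linear combination of the $S_k$, so each $q\in S_k$ receives the \emph{same} multiplicity $\varphi_i(1)\varphi_j(1)\langle\varphi_i\cdot\varphi_j\mid\varphi_k\rangle/\varphi_k(1)$ in $S_i\cdot S_j$; this is the defining condition of a balanced covering. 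Summing this common multiplicity over $q\in S_k$ multiplies by $|S_k|=\varphi_k(1)^2$ and recovers exactly $\alpha(\varphi_i,\varphi_j,\varphi_k)$, verifying \eqref{CovEq} and hence the covering condition.

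There is essentially no substantive obstacle here; the statement is pure bookkeeping once one has Lemma~\ref{EquivDef} and the balanced-product formula. The only point requiring any care is to check that expressing $S_i\cdot S_j$ as an $\mathbb N$-linear combination of the $S_k$ really is equivalent to the \emph{constancy} of $m_q(S_i,S_j)$ across each fibre $S_k$, so that the two halves of the biconditional genuinely use the ``balanced'' hypothesis in the same sense on both sides.
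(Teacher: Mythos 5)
Your proposal is correct and follows the same route as the paper, which treats this corollary as an immediate specialization of Lemma~\ref{EquivDef} and the displayed identity $S_x\cdot S_y=\sum_{z}\frac{\alpha(x,y,z)}{w(z)}S_z$ to the weights \eqref{E:weight} and multiplication constants \eqref{def} of the weighted character quasigroup. The extra care you take in the converse direction (checking that an $\mathbb N$-linear combination of the $S_k$ forces $m_q(S_i,S_j)$ to be constant on each $S_k$, and that summing recovers \eqref{CovEq}) is exactly the bookkeeping the paper leaves implicit.
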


Specializing to the case of metacyclic groups, we obtain the following.

\begin{corollary}\label{MetaCov1}
A quasigroup $Q$ is a balanced cover of
$$ G=\braket{x,y \ | \ y^m=x^l, x^n=1, x^y=x^k}  $$
 if and only if there exist disjoint subsets $S_{a,\alpha}$ of size $r_a^2$ for each $a\in I$ and $0\leq \alpha < s_a$ such that
\begin{equation}\label{MetCovReq}
S_{a,\alpha}\cdot S_{b,\beta}= gcd(r_a,r_b)\sum_{a\in I}\sum_\gamma S(a,b,c)S_{c,\gamma}\,,
\end{equation}
where $\gamma$ satisfies \eqref{GammaCond}.
\end{corollary}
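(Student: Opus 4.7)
The plan is to deduce this statement as a direct specialization of Corollary~\ref{CovChar} to the metacyclic setting, using the explicit product formula recorded in~\eqref{ProdCoeff}. First I would identify the irreducible characters $\varphi_i$ appearing in Corollary~\ref{CovChar} with the characters $\theta_{a,\alpha}$ of $M(k,l,m,n)$. Each $\theta_{a,\alpha}$ has degree $\theta_{a,\alpha}(1)=r_a$ (from the displayed formula in the text, evaluated at $j=i=0$), so the required subsets $S_{a,\alpha}\subseteq Q$ must have cardinality $\theta_{a,\alpha}(1)^2=r_a^2$, matching the statement.

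Next, I would substitute the inner product from~\eqref{ProdCoeff} into the coefficient appearing in Corollary~\ref{CovChar}, obtaining
\[
\frac{\theta_{a,\alpha}(1)\,\theta_{b,\beta}(1)\,\langle\theta_{a,\alpha}\cdot\theta_{b,\beta},\theta_{c,\gamma}\rangle}{\theta_{c,\gamma}(1)}=\frac{r_a r_b}{r_c}\cdot\frac{r_c\,S(a,b,c)}{\lcm(r_a,r_b)}=\frac{r_a r_b}{\lcm(r_a,r_b)}\,S(a,b,c)=\gcd(r_a,r_b)\,S(a,b,c)
\]
whenever the congruence~\eqref{GammaCond} holds, and $0$ otherwise; the last equality uses the identity $r_a r_b=\gcd(r_a,r_b)\lcm(r_a,r_b)$. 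Inserting this into the defining relation of Corollary~\ref{CovChar} and collapsing the $\gamma$-sum to those indices satisfying~\eqref{GammaCond} produces exactly the displayed relation~\eqref{MetCovReq}. Since Corollary~\ref{CovChar} is an \emph{if and only if} statement, both directions of the present corollary follow simultaneously.

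There is no significant obstacle here: the substantive work was carried out in Lemma~\ref{RootOf1Lem} and in the character-product computation culminating in~\eqref{ProdCoeff}, which delivered the inner products in a form precisely suited to this translation. The only points requiring care are the algebraic simplification $r_a r_b/\lcm(r_a,r_b)=\gcd(r_a,r_b)$, and the bookkeeping that the outer sum in~\eqref{MetCovReq} ranges over $c\in I$ (the displayed ``$a\in I$'' appears to be a typographical slip) with $\gamma$ restricted to the residues prescribed by~\eqref{GammaCond}.
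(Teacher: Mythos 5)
Your proposal is correct and matches the paper's own proof, which likewise obtains \eqref{MetCovReq} by substituting the inner product from \eqref{ProdCoeff} into the coefficient of Corollary~\ref{CovChar} and simplifying via $r_ar_b/\lcm(r_a,r_b)=\gcd(r_a,r_b)$. Your observation that the outer sum should run over $c\in I$ rather than $a\in I$ is also consistent with the paper's displayed computation.
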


\begin{proof}
Corollary~\ref{CovChar} yields 
$$ S_{a,\alpha}\cdot S_{b,\beta}= \sum_{c\in I}\sum_\gamma \frac{r_ar_br_c}{r_c\text{lcm}(r_a,r_b)}S_{c,\gamma} = \gcd(r_a,r_b)\sum_{c\in I}\sum_\gamma S(a,b,c)S_{c,\gamma}\,, $$
where $\gamma$ satisfies \eqref{GammaCond}.
\end{proof}

\begin{corollary}\label{MetaCov2}
A quasigroup $Q$ is a balanced cover of $M(k.l,m,n)$
 if and only if there exist disjoint subsets $S_{a,\alpha}$ of size $r_a^2$ for each $a\in I$ and $0\leq \alpha < s_a$ such that
\begin{equation}\label{MetCovReq2}
S_{a,\alpha}\cdot S_{b,\beta}= r_a\gcd(r_a,r_b)\sum_{i=0}^{r_b-1}\sum_\gamma \frac{S_{a+bk^i,\gamma}}{r_{a+bk^i}}\,,
\end{equation}
where $\gamma$ satisfies \eqref{GammaCond}.
\end{corollary}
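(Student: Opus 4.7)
The plan is to derive Corollary~\ref{MetaCov2} directly from Corollary~\ref{MetaCov1} by reindexing the sum over orbit representatives $c\in I$ as a sum over elements of the orbit $Kb$. Starting from
$$S_{a,\alpha}\cdot S_{b,\beta}= \gcd(r_a,r_b)\sum_{c\in I}\sum_\gamma S(a,b,c)\,S_{c,\gamma},$$
it suffices to establish the bare identity
$$\sum_{c\in I}S(a,b,c)\,S_{c,\gamma}\;=\;r_a\sum_{i=0}^{r_b-1}\frac{S_{a+bk^i,\gamma}}{r_{a+bk^i}}$$
and then multiply through by $\gcd(r_a,r_b)$ and sum over admissible $\gamma$.

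For that identity I would extract, in clean form, the double-counting argument already used in the proof of \eqref{ProdCoeff2}. Applied to any quantity $X_c$ that is constant on orbits, the same argument yields
$$\sum_{c\in I} r_c\,S(a,b,c)\,X_c \;=\; r_a\sum_{s\in Kb} X_{a+s}.$$
The left-hand side groups the pairs $(ak^i,bk^j)$ by the orbit of their sum (the orbit $Kc$ contributes $r_cS(a,b,c)$ pairs), while the right-hand side uses the reparametrization $ak^i+bk^j=(a+bk^{j-i})k^i$ and the fact that $X$ is constant along each orbit $K(a+bk^{j-i})$. Plugging in $X_c:=S_{c,\gamma}/r_c$, which depends only on the orbit of $c$, cancels the factor $r_c$ on the left and yields exactly the desired identity.

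The one book-keeping point I would want to verify carefully is that the congruence \eqref{GammaCond} which selects the admissible $\gamma$ really is a function of the orbit of $c$ and not of the specific representative, so that the expressions $S_{a+bk^i,\gamma}$ and $r_{a+bk^i}$ are well-defined via the chosen representatives in $I$. This is the same observation made at the start of the proof of \eqref{ProdCoeff2}: because $l(k-1)\equiv 0\pmod n$ implies $lk^i\equiv l\pmod n$, the value $\{cl/n\}$ depends only on the orbit $Kc$. Once this is in hand the whole argument is a formal manipulation of Corollary~\ref{MetaCov1}, so I don't anticipate any real obstacle beyond keeping the indexing honest.
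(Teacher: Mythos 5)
Your argument is correct and is essentially the paper's: the paper obtains \eqref{MetCovReq2} by feeding the reindexed product formula \eqref{ProdCoeff2} into Corollary~\ref{CovChar}, and your route through Corollary~\ref{MetaCov1} just redoes, at the level of the sets $S_{c,\gamma}$, the same double-counting identity $\sum_{c\in I} r_c S(a,b,c)X_c = r_a\sum_{s\in Kb}X_{a+s}$ that the paper already used to prove \eqref{ProdCoeff2}, together with the same observation that \eqref{GammaCond} depends only on the orbit of $c$. The two derivations use identical ingredients in a trivially different order, so there is nothing further to add.
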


\begin{proof}
Corollary~\ref{CovChar} yields 
$$ S_{a,\alpha}\cdot S_{b,\beta}= \sum_{i=0}^{r_b-1}\sum_\gamma \frac{r_ar_br_a}{r_{a+bk^i}\lcm(r_a,r_b)}S_{a+bk^i,\gamma} = r_a\gcd(r_a,r_b)\sum_{i=0}^{r_b-1}\sum_\gamma \frac{S_{a+bk^i,\gamma}}{r_{a+bk^i}}\,, $$
where $\gamma$ satisfies \eqref{GammaCond}.
\end{proof}

Note that for each $0\leq j<s_b$,
$$
S_{0,s_bj} \cdot S_{b,\beta} = \sum_{i=0}^{r_b-1} \sum_\gamma \tfrac{S_{bk^i,\gamma}}{r_{bk^i}} = \sum_\gamma S_{b,\gamma} =S_{b,\beta} \,,
$$
and so multiplication by $S_{0,s_bj}$ fixes $S_{b,\beta}$ setwise, allowing us to write 
$$ S_{b,\beta} = \sum_{j=0}^{r_b-1} S_{0,s_bj} T_{b,\beta} $$
for each $b\in I$, where $T_{b,\beta}$ is a set of size $r_b$ for each $0\leq \beta <s_b$.

\subsection{The dihedral groups}

The dihedral group $D_d$ of degree $d$ and order $2d$ is the metacyclic group $M(-1,0,2,d)$. Balanced character groups of dihedral groups were studied in \cite{StevensonSmith}. We now reproduce one of the results from that work \cite[Th.~4.1]{StevensonSmith} within the present context. For simplicity, we omit the references to generalized quaternion groups that also appeared in \cite[Th.~4.1]{StevensonSmith}.

\begin{theorem}\label{even covering}
Let $Q$ be a finite quasigroup of order $2d$, where $d=2k$ is even. Then $Q$ functions as a character quasigroup of $D_d$
if and only if there exist elements $q_1,q_2,q_3,q_4\in Q$ and subsets $S_i\subseteq Q$ for $0<i<k$, each of size $4$, along with $S_0=\Set{q_1,q_2}$ and $S_k=\Set{q_3,q_4}$, such that the conditions
\begin{enumerate}
\item[$(0)$]
$Q$ is the disjoint union of the sets $S_0,\dots,S_k;$
\item 
$S_0 \cup S_k \cong \mathbb{Z}/2 \oplus \mathbb{Z}/2$ as a subgroup of $Q\,;$
\item
$ 
\begin{cases}
(\mathrm a)\quad
q_jS_i=S_i=S_iq_j &\mbox{for }0<i<k \mbox{ and }j=1,2\,,\\
(\mathrm b)\quad
q_jS_i=S_{k-i}=S_iq_j &\mbox{for }0<i<k \mbox{ and }j=3,4\,;
\end{cases}
$
\item $ S_j\cdot S_i=S_i\cdot S_j=S_{k-i}\cdot S_{k-j} \mbox{ for } 0<i,j,i+j<k\,;$
\item 
for $0<i,j<k$,
$$
S_i \cdot S_j=
\begin{cases} 
4S_0 + 4S_k & \mbox{if } \ i=j=k/2, \\
2S_{i+j} + 4S_0 & \mbox{if } \ i=j  \neq k/2, \\ 
4S_k + 2S_{i-j} & \mbox{if } \ i+j=k \mbox{ and } i\neq j, \\
2S_{i+j} + 2S_{i-j} & \mbox{otherwise} \\
\end{cases}
$$
\end{enumerate} 
are satisfied. In condition $(4)$, the indices $h$ on the sets $S_h$ are interpreted as residues modulo $d$, with the convention that $S_h=S_{-h}$.
\end{theorem}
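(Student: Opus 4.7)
The plan is to specialize Corollary~\ref{MetaCov2} to the parameters of $D_d=M(-1,0,2,d)$ and match the resulting multiplication table of the blocks $\Set{S_{a,\alpha}}$ with conditions $(0)$--$(4)$. Here $n=d$, $m=2$, $l=0$, and the metacyclic action $a\mapsto -a$ on $\mathbb{Z}/d$ produces the two singleton orbits $\Set{0}$ and $\Set{k}$ (where $k=d/2$), each contributing a pair of $1$-element blocks $S_{0,0},S_{0,1}$ and $S_{k,0},S_{k,1}$, together with $k-1$ doubleton orbits $\Set{a,-a}$ for $0<a<k$, each contributing a single $4$-element block $S_{a,0}$. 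Identifying these singleton blocks with $\Set{q_1},\dots,\Set{q_4}$ and writing $S_a$ for $S_{a,0}$ gives condition $(0)$, and since $l=0$ every fractional part in \eqref{GammaCond} vanishes, so that congruence reduces to $\gamma\equiv\alpha+\beta\pmod{\gcd(s_a,s_b)}$.

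Condition $(1)$ follows by applying Corollary~\ref{MetaCov2} to all four choices $a,b\in\Set{0,k}$: the congruence $k+k\equiv 0\pmod d$ together with $\gamma\equiv\alpha+\beta\pmod 2$ shows that the four singleton blocks form a Klein four-group. Condition $(2)$ is the computation that a singleton block indexed by $0$ or $k$ times a block $S_b$ with $0<b<k$ returns a single $4$-element block; indeed $b$ and $-b$ (respectively $k+b$ and $k-b$) lie in the same doubleton orbit, and the two denominators $r=2$ combine with the two outer-sum terms to yield a single copy of $S_b$ or $S_{k-b}$. Condition $(3)$ follows from the symmetry $S(a,b,c)=S(b,a,c)$ together with the observation that the unordered pair $\Set{a+b,\,a-b}$, read modulo $d$ under $h\sim -h$, is invariant under $(a,b)\mapsto(k-a,k-b)$, since $(k-a)+(k-b)\equiv -(a+b)\pmod d$ and $(k-a)-(k-b)=b-a$.

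Condition $(4)$ requires a case-by-case unwinding of
$$
S_a\cdot S_b \;=\; 4\sum_{c\in\Set{a+b,\,a-b}}\sum_{\gamma}\frac{S_{c,\gamma}}{r_c}
$$
for $0<a,b<k$, with indices read modulo $d$ under the convention $S_h=S_{-h}$. The denominator $r_c$ equals $2$ unless $c$ lies in a singleton orbit (that is, $c\equiv 0$ or $c\equiv k\pmod d$), in which case $r_c=1$ and the $\gamma$-sum doubles, producing the four special sub-cases listed in the theorem: $4S_0+4S_k$ when both $a+b=k$ and $a-b=0$ (i.e.\ $a=b=k/2$), $4S_0+2S_{2a}$ when only $a-b$ collapses ($a=b\ne k/2$), $4S_k+2S_{|a-b|}$ when only $a+b=k$ ($a\ne b$), and $2S_{a+b}+2S_{|a-b|}$ in the generic case; the total weight in each case is $16=|S_a|\,|S_b|$, providing a sanity check.

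For the converse, given $q_1,\dots,q_4$ and $\Set{S_i}$ satisfying $(0)$--$(4)$, the singletons $\Set{q_j}$ and $4$-element sets $S_i$ are disjoint subsets of $Q$ with the cardinalities and products prescribed by Corollary~\ref{MetaCov2}, so $Q$ is a balanced cover of $D_d$. The hard part will be the bookkeeping in condition $(4)$: one must carefully track when the orbit of $c=a\pm b$ drops from size $2$ to size $1$, since this collapse (and the consequent halving of $r_c$ combined with the doubling of the $\gamma$-sum) is precisely what converts the generic $2S_{a\pm b}$ summands into the $4S_0$ and $4S_k$ summands of the three exceptional cases.
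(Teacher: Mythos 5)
Your proposal is correct and follows essentially the same route as the paper: specialize the balanced-covering corollary for metacyclic groups to $D_d=M(-1,0,2,d)$, observe that the orbits $\Set{0}$ and $\Set{k}$ are singletons (giving the four one-element blocks and the Klein four-group of condition $(1)$) while the remaining orbits are doubletons, and obtain condition $(4)$ by the case analysis of when $a\pm b$ falls into a singleton orbit. The only cosmetic difference is that you invoke Corollary~\ref{MetaCov2} and track the collapse of $r_c$ from $2$ to $1$, whereas the paper invokes Corollary~\ref{MetaCov1} and evaluates $S(a,b,c)$ directly; these are equivalent bookkeeping devices.
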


\begin{proof}
By Corollary~\ref{MetaCov1}, a quasigroup $Q$ is a balanced covering of $D_d$ if and only if there exist disjoint subsets $S_{a,\alpha}$ of size $r_a^2$ for each $a\in I$ and $0\leq \alpha <s_a$ such that \eqref{MetaCov1} is satisfied.  When $r_a=r_b=1$, Equation~\ref{GammaCond} becomes $\alpha +\beta \equiv \gamma \mod 2$, and so in this case 
$$ S_{a,\alpha}\cdot S_{b.\beta}=S_{a+b,\alpha+\beta}\,. $$
With $\set{q_1,q_2,q_3,q_4}=\set{S_{0,0},S_{0,1},S_{k,0},S_{k,1}}$, condition $(1)$ is clearly satisfied. 
Conditions $(2)$ and $(3)$ are trivially verified, and when $r_a=r_b=2$, Equation~\ref{GammaCond} becomes $\alpha +\beta \equiv \gamma \mod 1$ for any $c\in I$, and in particular is satisfied for all $0\leq \gamma<s_c$. Therefore in this case equation \ref{GammaCond} becomes
$$ S_{a,0}\cdot S_{b,\beta } = 2\sum_{c=0}^k \sum_\gamma S(a,b,c) S_{c,\gamma}\,. $$
Easy case analyses reveal that $S(a,b,0)=4$ when $a=b$, and $S(a,b,k)=4$ when $a+b=k$, while $S(a,b,a+b)=S(a,b,a-b)=2$ in all other cases, yielding condition $(4)$. 
\end{proof}

Using Corollary~\ref{MetaCov2}, we can get an upgraded theorem for the balanced covering groups of the dihedral groups. 

\begin{theorem}
A quasigroup $Q$ is a balanced cover of $D_d$ if and only if there exist disjoint subsets $S_{a,\alpha}$ of size $r_a^2$ for each $a\in I$ and $0\leq \alpha <s_a$ such that 
$$ S_{a,\alpha}\cdot S_{b,\beta} = r_a\gcd(r_a,r_b)\sum_{i=0}^{r_b-1} \sum_\gamma \frac{S_{a+b(-1)^i}}{r_{a+b(-1)^i}}\,, $$
where $\gamma$ satisfies \eqref{GammaCond}.
\end{theorem}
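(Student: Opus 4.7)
The strategy is to specialize Corollary~\ref{MetaCov2} to the parameters defining the dihedral group. Observe that $D_d = M(-1,0,2,d)$, so we set $n=d$, $m=2$, $l=0$, and $k=-1$. Under these choices the action of $K=\braket{y}$ on $\mathbb{Z}/d$ is inversion, so $bk^i = b(-1)^i$ for every $i$, and the orbits have the form $\{a,-a\}$ (with $r_a=2$) except for the fixed points $a\in\{0,d/2\}$ (when $d$ is even), where $r_a=1$.

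First I would quote Corollary~\ref{MetaCov2} verbatim and then substitute the dihedral parameters into \eqref{MetCovReq2}. Both directions of the biconditional transfer immediately, since the only $k$-dependence on the right side of \eqref{MetCovReq2} appears in the index $bk^i$. After substitution, the equation reads
$$S_{a,\alpha}\cdot S_{b,\beta} = r_a\gcd(r_a,r_b)\sum_{i=0}^{r_b-1}\sum_\gamma \frac{S_{a+b(-1)^i,\gamma}}{r_{a+b(-1)^i}},$$
which is the advertised formula (the $\gamma$ subscript on $S$ being implicit in the statement).

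The indexing condition \eqref{GammaCond} carries over unchanged. In fact, since $l=0$ every fractional part $\{al/n\}$ vanishes and \eqref{GammaCond} collapses to $\alpha+\beta\equiv \gamma \mod \gcd(s_a,s_b)$; however, this simplification need not be made explicit, as the theorem refers back to \eqref{GammaCond} directly. The proof is therefore essentially a direct specialization, and the principal ``obstacle'' is purely clerical: verifying that $\gcd(n,l)=d$, $k^m=1$, and $l(k-1)\equiv 0 \mod n$ all hold trivially for $D_d$, so that Corollary~\ref{MetaCov2} genuinely applies.
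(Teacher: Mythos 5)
Your proposal is correct and matches the paper's intent exactly: the paper states this theorem without a separate proof, presenting it as the direct specialization of Corollary~\ref{MetaCov2} to $D_d = M(-1,0,2,d)$, which is precisely the substitution $k=-1$, $l=0$, $m=2$, $n=d$ (so $bk^i = b(-1)^i$) that you carry out. Your additional observations --- that \eqref{GammaCond} collapses to $\alpha+\beta\equiv\gamma \bmod \gcd(s_a,s_b)$ when $l=0$, and that the missing $\gamma$ subscript on $S_{a+b(-1)^i}$ in the statement is implicit --- are accurate and, if anything, more careful than the paper itself.
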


\subsection{The dicyclic groups}

The dicyclic group Dic$_l$ is defined to be the metacyclic group $M(-1,l,2,2l)$ \cite[Ex.~8.2.3]{Scott}. We specialize our general results to this case. 

\begin{lemma}\label{FloorLem}
If $S(a,b,c)\neq 0$, then $\{a/2 \}+\{b/2 \}-\{c/2 \}$ is an integer.
\end{lemma}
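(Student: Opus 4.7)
The plan is to recognize this as a direct specialization of Corollary~\ref{C:RootOf1Lem} to the dicyclic case. The dicyclic group Dic$_l$ is defined as the metacyclic group $M(-1,l,2,2l)$, so in the metacyclic parameter notation we have $n = 2l$ and the parameter formerly called $l$ equals $l$ (an unfortunate notational collision, but the argument proceeds cleanly once we distinguish them).

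First I would compute the relevant fractional parts. For any integer $a$ in the orbit representative set $I$ for Dic$_l$, the quantity $al/n$ that appears in Corollary~\ref{C:RootOf1Lem} specializes to
\begin{equation*}
\frac{al}{n} = \frac{al}{2l} = \frac{a}{2}\,,
\end{equation*}
and likewise $bl/n = b/2$ and $cl/n = c/2$. Thus the fractional parts $\{al/n\}$, $\{bl/n\}$, $\{cl/n\}$ appearing in the corollary become exactly $\{a/2\}$, $\{b/2\}$, $\{c/2\}$.

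Next, I would invoke Corollary~\ref{C:RootOf1Lem} directly: under the hypothesis $S(a,b,c) \neq 0$, that corollary asserts that $\{al/n\} + \{bl/n\} - \{cl/n\}$ is an integer. Substituting the specialized values yields that $\{a/2\} + \{b/2\} - \{c/2\}$ is an integer, which is precisely the claim.

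There is essentially no obstacle here beyond bookkeeping: the lemma is a restatement of the general corollary for the specific parameter choice $(k,l,m,n) = (-1, l, 2, 2l)$. The only thing to be careful about is the clash between the dicyclic index $l$ and the metacyclic parameter $l$; in the final write-up I would either rename one of them temporarily or flag the coincidence explicitly so that the reader sees why $al/n$ simplifies to $a/2$ without ambiguity.
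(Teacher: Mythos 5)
Your proposal is correct and matches the paper's own proof, which simply cites Corollary~\ref{C:RootOf1Lem} with the observation that $n=2l$ for Dic$_l$, so that $al/n = a/2$. Your additional care in distinguishing the dicyclic index $l$ from the metacyclic parameter $l$ is sensible bookkeeping but does not change the argument.
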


\begin{proof}
This is an instance of Corollary~\ref{C:RootOf1Lem}, noting that $n=2l$ here.
\end{proof}

\begin{theorem}\label{DicyclicCov1}
A quasigroup $Q$ is a balanced cover of 
$$ G=\braket{x,y \ | \ y^2=x^l, x^{2l}=1, y^{-1}xy=x^{-1}} $$
if and only if there exist disjoint subsets $S_a \subseteq Q$ for $0\leq a \leq l$ such that $S_a$ is of size $r_a$ and: 
\begin{enumerate} 
\item[(a)] $\langle S_0, S_l\rangle \cong \begin{cases} 
\mathbb{Z}/_2 \oplus \mathbb{Z}/_2 & \text{if $l$ is even;} \\
\mathbb{Z}/_4 & \text{if $l$ is odd;} \\
\end{cases}$
\item[(b)] For all $0\leq a,b\leq l$ with $(r_a,r_b)\neq (1,1)$,
 \begin{equation}\label{DicyclicEq}
 S_a\cdot S_b= \tfrac{4\gcd(r_a,r_b)}{r_b} \sum_{i=0}^{r_b-1} \tfrac{S_{a+b(-1)^i}}{r_{a+b(-1)^i}}\,,
\end{equation}  
\end{enumerate}
where we impose $S_a=S_{ak^i}$ for all $i\geq 0$ and the subscript is taken modulo $n$. 
\end{theorem}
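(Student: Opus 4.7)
The plan is to specialize Corollary~\ref{MetaCov2} to the dicyclic parameters $(k,l,m,n)=(-1,l,2,2l)$ and translate the resulting balanced covering condition into the compact form of the theorem. First I would identify the $K$-orbit structure on $\mathbb{Z}/_{2l}$: under $a\mapsto -a$ the fixed points $0$ and $l$ give orbits of size $1$ and stabilizers of size $2$, so $r_0=r_l=1$ and $s_0=s_l=2$, while the paired orbits $\{a,-a\}$ for $0<a<l$ give $r_a=2$ and $s_a=1$. Corollary~\ref{MetaCov2} then demands four singletons $S_{0,0},S_{0,1},S_{l,0},S_{l,1}$ together with size-$4$ sets $S_{a,0}$ for $0<a<l$, subject to \eqref{MetCovReq2}.

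For the ``only if'' direction, given a balanced cover I would define $S_0:=S_{0,0}$, $S_l:=S_{l,0}$, and for $0<a<l$ take $S_a:=T_{a,0}$, the size-$r_a$ factor in the decomposition $S_{a,0}=(S_{0,0}+S_{0,1})T_{a,0}$ from the remark following Corollary~\ref{MetaCov2}. The four linear-character singletons $\{S_{0,0},S_{0,1},S_{l,0},S_{l,1}\}$ form a subquasigroup of $Q$ of order $4$, isomorphic to the character group of the abelianization $\mathrm{Dic}_l^{\mathrm{ab}}$; since abelianization forces $x^2=1$ and thereby reduces $y^2=x^l$ to $y^2=1$ when $l$ is even and to $y^2=x$ when $l$ is odd, this subquasigroup is $\mathbb{Z}/_2\oplus\mathbb{Z}/_2$ or $\mathbb{Z}/_4$ respectively, giving condition (a). Condition (b) then follows by substituting $k=-1$ into \eqref{MetCovReq2}, noting that $\gcd(s_a,s_b)=1$ whenever $(r_a,r_b)\neq(1,1)$ so that \eqref{GammaCond} is vacuous and the $\gamma$-sum collapses to a single term, with Lemma~\ref{FloorLem} handling the fractional-part congruences. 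The normalization $\tfrac{4\gcd(r_a,r_b)}{r_b}$ emerges after absorbing the $(S_{0,0}+S_{0,1})$ factor from the $T$-decomposition into the coefficient.

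For the ``if'' direction I would reverse the construction: the four elements of $\langle S_0,S_l\rangle$ given by (a) serve as $\{S_{0,0},S_{0,1},S_{l,0},S_{l,1}\}$, and for $0<a<l$ the size-$4$ set $S_{a,0}$ is built as $(S_{0,0}+S_{0,1})S_a$; condition (b) then yields \eqref{MetCovReq2} directly. The main obstacle is the bookkeeping that links the coarse sets $S_a$ of size $r_a$ to the finer sets $S_{a,\alpha}$ of size $r_a^2$ required by Corollary~\ref{MetaCov2}: one must confirm that the reconstructed $S_{a,0}$ are mutually disjoint of size $r_a^2$, and that the simpler normalization in (b) is genuinely equivalent to \eqref{MetCovReq2} once all the factors from the order-$4$ abelianization subquasigroup are tracked carefully across the cases $(r_a,r_b)\in\{(1,2),(2,1),(2,2)\}$.
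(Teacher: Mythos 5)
Your overall strategy---specializing Corollary~\ref{MetaCov1}/\ref{MetaCov2} to $M(-1,l,2,2l)$, reading off the orbit data $r_0=r_l=1$, $s_0=s_l=2$ and $r_a=2$, $s_a=1$ for $0<a<l$, and observing that \eqref{GammaCond} is vacuous whenever $(r_a,r_b)\neq(1,1)$---is the right one and is what the paper does. The gap is in how you identify the sets $S_a$. You take $S_0:=S_{0,0}$, $S_l:=S_{l,0}$ and $S_a:=T_{a,0}$, i.e.\ sets of sizes $1$, $1$ and $r_a$. The paper instead takes unions over the second index: $S_0:=S_{0,0}+S_{0,1}$, $S_l:=S_{l,0}+S_{l,1}$ and $S_a:=S_{a,0}$, so that $|S_0|=|S_l|=2$ and $|S_a|=4=2r_a$ for $0<a<l$; condition (b) then falls out by dividing both sides of \eqref{MetCovReq} by $s_as_b$. (The ``size $r_a$'' in the statement is an error in the paper: the sizes forced by \eqref{DicyclicEq}, and used throughout the rest of the section where $S_0=x_0+y_0$ and $S_a=(x_0+y_0)(x_a+y_a)$, are $2r_a$.)

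With your choice the theorem's conditions actually fail. Run a weight count on \eqref{DicyclicEq} with $a=b$, $0<a<l$, $2a\not\equiv 0,l$: the left side $S_a\cdot S_a$ has total multiplicity $|S_a|^2=r_a^2=4$, while the right side $2S_{2a}+4S_0$ has total multiplicity $2\cdot 2+4\cdot 1=8$ under your sizes. So no amount of ``absorbing the $(S_{0,0}+S_{0,1})$ factor into the coefficient'' can make $T_{a,0}\cdot T_{b,0}$ satisfy \eqref{DicyclicEq} as written; the coefficient $\tfrac{4\gcd(r_a,r_b)}{r_b}$ is calibrated for the larger sets. Condition (a) is also compromised: with $S_{0,0}$ acting as an identity, $\braket{S_{0,0},S_{l,0}}$ may be a two-element subquasigroup and need not contain $S_{0,1}$ or $S_{l,1}$, whereas the intended content of (a) is that the four linear-character fibres together form $\mathbb{Z}/_2\oplus\mathbb{Z}/_2$ or $\mathbb{Z}/_4$. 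Your identification of that four-element structure with the dual of the abelianization of $\mathrm{Dic}_l$ is correct and is a cleaner justification of (a) than the paper's ``clearly,'' but it must be applied to all four singletons, not to $\braket{S_{0,0},S_{l,0}}$. Finally, note that a product such as $(S_{0,0}+S_{0,1})T_{a,0}(S_{0,0}+S_{0,1})T_{b,0}$ is not even well defined in the quasigroup semialgebra without a choice of bracketing, so the $T$-decomposition is a shaky foundation for the only-if direction; it is better reserved for the later structural analysis where the paper actually invokes it.
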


\begin{proof}
First suppose that $Q$ covers $G$. By Corollary~\ref{MetaCov1} there exist disjoint subsets $S_{a,\alpha}$ of size $r_a^2$ for each $a\in I$ and $0\leq \alpha <s_a$ which satisfy the required equations. If we take $S_0=S_{0,0}+S_{0,1}$ and $S_l=S_{l,0}+S_{l,1}$, these will clearly satisfy $(a)$, and upon dividing the right hand side of  \eqref{MetCovReq} by $s_as_b$, we obtain condition $(b)$. 

Conversely, suppose we are given disjoint subsets $S_a$ of size $r_a$ satisfying conditions $(a)$ and $(b)$. Define each of the following: 
$$ S_{0,0}=e, \qquad S_{0,1}=S_0, \qquad S_{l,0}=S_l, \qquad S_{l,1}=S_0S_l, \qquad $$
and $S_a=S_{a,0}$ for each $0<a<l$. By Lemma~\ref{FloorLem}, equation \eqref{GammaCond} is always satisfied when $(r_a,r_b)\neq (1,1)$, and so \eqref{MetCovReq} must hold. 
\end{proof}

For the duration of this section we will fix $S_0=x_0+y_0$ and $S_l=x_l+y_l$, which by previous observations means we may write $S_a=(x_0+y_0)(x_a+y_a)$ for some $x_a,y_a\in Q$ for each $0<a<l$. Without loss of generality we are able to chose $x_0$ such that $x_0y_0=y_0$. In particular, if $Q$ is a group, then $x_0=1$. 

By Theorem~\ref{DicyclicCov1}, $\set{x_0,y_0,x_l,y_l}$ must form a copy of either $\mathbb{Z}/_2 \oplus \mathbb{Z}/_2$ if $l$ is even or $\mathbb{Z}/_4$ if $l$ is odd. We already have a partial Cayley table 
\begin{center}
\large
 \begin{tabular}{c | c c c c}
    $\cdot$ & $x_0$ & $y_0$ & $x_l$ & $y_l$  \\
    \cline{1-5}
    $x_0$ & $x_0$ & $y_0$ & $x_l$ & $y_l$  \\
    $y_0$ & $y_0$ & $x_0$ &  &  \\
    $x_l$ & $x_l$ & &  &  \\
    $y_l$ & $y_l$ & &  &  \\
	   \end{tabular}
\end{center}
\renewcommand{\arraystretch}{1}	
filled out, which immediately allows us to say $y_0\cdot x_l=y_l$ and $y_0\cdot y_l=x_l$. Moreover, if $x_l^2=x_0$, then the collection is isomorphic to $\mathbb{Z}/_2\oplus \mathbb{Z}/_2$, and if $x_l^2=y_0$, then it is isomorphic to $\mathbb{Z}/_4$. Thus, we require $x_l^2=y_0^l$, for $Q$ to be a character quasigroup for Dic$_l$.

\begin{lemma}
For each $1\leq i \leq l-1$, the relation
$$ (x_0+y_0)(x_i+y_i)(x_0+y_0)=2(x_0+y_0)(x_i+y_i) $$ 
holds
\end{lemma}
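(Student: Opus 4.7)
The plan is to recognize the left-hand side $(x_0+y_0)(x_i+y_i)(x_0+y_0)$, parsed left-to-right in the quasigroup semialgebra $\mathbb NQ$, as the product $S_i\cdot S_0$: by the construction immediately preceding the lemma, $S_i=(x_0+y_0)(x_i+y_i)$ for $0<i<l$ and $S_0=x_0+y_0$. So the identity reduces to $S_i\cdot S_0=2S_i$ for $1\le i\le l-1$, and re-expanding $S_i$ on both sides will finish the argument.

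To prove this reduced identity I would appeal directly to part~(b) of Theorem~\ref{DicyclicCov1}. First I would check that the hypothesis $(r_a,r_b)\neq(1,1)$ is satisfied: in Dic$_l$ the action of $K=\langle y\rangle$ on $\mathbb Z/2l\mathbb Z$ is negation, so the orbits are $\{0\}$, $\{l\}$ and the two-element sets $\{a,2l-a\}$ for $0<a<l$. Hence $r_0=r_l=1$ and $r_a=2$ for $0<a<l$, giving $(r_i,r_0)=(2,1)$, and \eqref{DicyclicEq} applies.

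Substituting $a=i$, $b=0$ into \eqref{DicyclicEq}, the inner sum collapses to a single term (because $r_b=r_0=1$), the subscript simplifies to $i+0\cdot(-1)^0=i$, and the prefactor is $4\gcd(r_i,r_0)/r_0=4$. Thus
$$
S_i\cdot S_0 \;=\; 4\cdot\frac{S_i}{r_i} \;=\; 4\cdot\frac{S_i}{2} \;=\; 2S_i,
$$
which, re-expanded via $S_i=(x_0+y_0)(x_i+y_i)$, is exactly the stated identity.

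There is no serious obstacle; the one subtlety worth a sentence is that $Q$ is not assumed associative, so the three-factor expression in the lemma must be read in a fixed order. The natural left-to-right reading $((x_0+y_0)(x_i+y_i))(x_0+y_0)=S_i\cdot S_0$ matches the definition of $S_i$ given just before the lemma, so no separate bracketing argument is needed.
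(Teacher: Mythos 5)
Your proposal is correct and follows the same route as the paper: the paper's proof likewise just invokes \eqref{DicyclicEq} to get $S_i\cdot S_0=2S_i$ and expands. You supply the details the paper leaves implicit (the orbit sizes $r_0=1$, $r_i=2$, the evaluation of the prefactor, and the bracketing remark), but there is no difference in substance.
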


\begin{proof}
By equation \ref{DicyclicEq} the quartets must satisfy $S_i\cdot S_0=2S_i$, in particular 
$$ (x_0+y_0)(x_i+y_i)(x_0+y_0)=2(x_0+y_0)(x_i+y_i) $$ 
\end{proof}

\begin{lemma}\label{ThreeLem0}
Let $Q$ be a character quasigroup for Dic$_l$. Then exactly one of the following holds:
\begin{enumerate}
\item[(a)] $x_1^2,y_1^2\in \set{x_0,y_0}$ and $S_2=x_1y_1+y_1x_1$\,;
\item[(b)] $x_1^2, x_1y_1\in \set{x_0,y_0}$ and $S_2=y_1x_1+y_1^2$\,;
\item[(c)] $x_1^2,y_1x_1\in\set{x_0,y_0}$ and $S_2=x_1y_1+y_1^2$\,;
\item[(d)] $x_1y_1,y_1x_1\in\set{x_0.y_0}$ and $S_2=x_1^2+y_1^2$\,.
\end{enumerate}
\end{lemma}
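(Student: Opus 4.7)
The plan is to extract structural information about the multiset
$T_1^2 := (x_1+y_1)(x_1+y_1) = x_1^2 + x_1 y_1 + y_1 x_1 + y_1^2$
from the product identity $S_1 \cdot S_1 = 2 S_2 + 4 S_0$, which is the $a=b=1$ instance of \eqref{DicyclicEq}. Expanding $S_1 = (x_0+y_0)(x_1+y_1)$ and applying the previous lemma $(x_0+y_0)(x_1+y_1)(x_0+y_0) = 2(x_0+y_0)(x_1+y_1)$ rewrites the left-hand side as $2 S_0 \cdot T_1^2$, while the identity $S_0 \cdot x_0 = S_0$ (read off the Cayley table) rewrites the right-hand side as $2 S_0 \cdot (T_2 + 2 x_0)$. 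After dividing by $2$, the working identity becomes $S_0 \cdot T_1^2 = S_2 + 2 S_0$ in $\mathbb{N} Q$.

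Next I would read off the coefficients of individual elements of $Q$ on both sides. At $q = x_0$, using the Cayley-table entries $x_0 \cdot x_0 = x_0$ and $y_0 \cdot y_0 = x_0$, the coefficient of $x_0$ in $S_0 \cdot T_1^2$ equals $m_{x_0}(T_1^2) + m_{y_0}(T_1^2)$, and comparing with the coefficient $2$ on the right forces exactly $2$ of the four products $x_1^2, y_1^2, x_1 y_1, y_1 x_1$ (counted with multiplicity) to lie in $S_0$. The analogous comparison at each $q \in S_2$ yields $m_{x_0 \backslash q}(T_1^2) + m_{y_0 \backslash q}(T_1^2) = 1$, and since neither $x_0 \backslash q$ nor $y_0 \backslash q$ can itself be in $S_0$ (lest $q \in S_0 \cap S_2 = \emptyset$), one deduces that the two products lying outside $S_0$, call them $q_1, q_2$, are distinct (otherwise some $q \in S_2$ would receive coefficient at least $2$) and satisfy $S_0 \cdot \{q_1, q_2\} = S_2$; so setting $\{x_2, y_2\} := \{q_1, q_2\}$ gives a legitimate $T_2$.

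The final step is a case analysis on which pair of $\{x_1^2, y_1^2, x_1 y_1, y_1 x_1\}$ lies in $S_0$. Of the $\binom{4}{2} = 6$ possibilities, two are swapped by the symmetry $x_1 \leftrightarrow y_1$, which merely relabels the unordered set $T_1 = \{x_1, y_1\}$, so up to this relabeling we are left with exactly the four cases (a)--(d) of the statement. Mutual exclusivity follows from the ``exactly $2$'' multiplicity count, since any two of these cases taken together would force at least $3$ of the products into $S_0$. I expect the main obstacle to be the opening reduction: left multiplication by $S_0$ is not injective on $\mathbb{N} Q$, so the identity $S_0 \cdot T_1^2 = S_2 + 2 S_0$ cannot be simplified by algebraic cancellation, and the required information must instead be extracted coefficient by coefficient via the quasigroup's left-division structure against the two elements of $S_0$.
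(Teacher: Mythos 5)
Your proposal is correct and follows essentially the same route as the paper: both compute $S_1^2$ in two ways, using the preceding lemma to reduce the left-hand side to $2(x_0+y_0)(x_1^2+x_1y_1+y_1x_1+y_1^2)$ and comparing against $2S_2+4S_0$ from \eqref{DicyclicEq}, then conclude that exactly two of the four products lie in $\set{x_0,y_0}$ and case-split accordingly. Your coefficient-by-coefficient extraction via left division merely makes explicit the step the paper states without detail, so the two arguments are the same in substance.
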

\begin{proof}
We will compute $S_1^2$ in two ways. On the one hand, using the previous lemma, we have
$$ S_1^2 = (x_0+y_0)(x_1+y_1)(x_0+y_0)(x_1+y_1)=2(x_0+y_0)(x_1^2+x_1y_1+y_1x_1+y_1^2)\,, $$
which by Theorem~\ref{DicyclicCov1} is equal to $4e+4q+2S_2$. Thus exactly two of $x_1^2,x_1y_1,y_1x_1$ and $y_1^2$ belong to the set $\set{x_0,y_0}$ and the remaining two elements belong to the set $S_2$.
 
If both $x_1^2$ and $y_1^2$ belong to $\{x_0,y_0 \}$ then we are in case (a), and if exactly one of them is in the set then without loss of generality we may assume that it is $x_1^2$, leading to cases (b) and (c). Finally if $x_1^2$ and $y_1^2$ both do not belong to the set $\set{x_0,y_0}$ then we get case (d).  
\end{proof}

\begin{remark}
If $Q$ is a group then case (c) of Lemma~\ref{ThreeLem0} is no longer possible, since if $y_1x_1=y_0^j$ for some $j\in \set{0,1}$, then $y_1=y_0^ix^{-1}=y_0^{i+j}x_1$, which contradicts the size of $S_1$. In case (b) we can further say that $x_1y_1=y_0$ and $x_1^2=x_0$, since they clearly cannot be equal, and if $x_1y_1=x_0$ and $x_1^2=y_0$, then $y_1=x_1^{-1}x_0=y_0x_0$, again contradicting the size of $S_1$. Finally, in case (d) we can say $x_1y_1=y_1x_1$, since $x_1y_1=x_0$ if and only if $y_1x_1=x_0$. 
\end{remark}

\begin{lemma}\label{ThreeLem}
Let $Q$ be a character quasigroup for Dic$_n$. Then exactly one of the following holds: 
\begin{enumerate}
\item[(0)] $x_1y_0=x_1$\,; 
\item $x_1y_0=y_1$\,;
\item$x_1y_0=y_0x_1$ and $y_1y_0=y_0y_1$\,; 
\item $x_1y_0=y_0y_1$\,.
\end{enumerate}
\end{lemma}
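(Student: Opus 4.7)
My plan is to study the right action of $y_0$ on the set $S_1$, forcing $x_1 y_0$ into a specific four-element set, and then handle case~(2) via a secondary analysis.

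First I would show that right multiplication by $y_0$ permutes $S_1$. From Theorem~\ref{DicyclicCov1}(b) we have $S_1 \cdot S_0 = 2 S_1$, and writing $S_0 = x_0 + y_0$, both $S_1 x_0$ and $S_1 y_0$ are $4$-element subsets of $Q$ (right multiplication by a single element being a bijection). Since their multiset union is $2 S_1$ and is supported in the $4$-element set $S_1$, each must equal $S_1$ as a set.

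Second, I would expand $S_1 \cdot y_0$ using $S_1 = x_0 x_1 + x_0 y_1 + y_0 x_1 + y_0 y_1$ and associativity (applicable in the character-group setting, where $x_0 = 1$), obtaining
\[
\{x_1 y_0,\ y_1 y_0,\ y_0(x_1 y_0),\ y_0(y_1 y_0)\} = \{x_1, y_1, y_0 x_1, y_0 y_1\}.
\]
Hence $x_1 y_0$ is one of the four elements on the right, yielding cases~(0)--(3); these are mutually exclusive because the four right-hand elements are the distinct members of $S_1$.

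Third, to obtain the extra identity $y_1 y_0 = y_0 y_1$ in case~(2), I would set $x_1 y_0 = y_0 x_1$ and remove this common element from both sides of the displayed equality, leaving $\{y_1 y_0,\ y_0^2 x_1,\ y_0(y_1 y_0)\} = \{x_1,\ y_1,\ y_0 y_1\}$. In both admissible structures of $\langle S_0, S_l\rangle$ the element $y_0$ has order $2$---automatic in $\mathbb{Z}/2\oplus\mathbb{Z}/2$, while in $\mathbb{Z}/4$, $y_0\in S_0$ is the unique order-$2$ element (the two order-$4$ generators necessarily lie in $S_l$). So $y_0^2 = 1$ and $y_0^2 x_1 = x_1$, reducing the equation to $\{y_1 y_0,\ y_0(y_1 y_0)\} = \{y_1,\ y_0 y_1\}$. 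The option $y_1 y_0 = y_1$ would force $y_0 = 1$, contradicting $|S_0| = 2$, so $y_1 y_0 = y_0 y_1$, as claimed.

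The main obstacle I anticipate is the use of associativity in the expansion step. It is automatic if $Q$ is a character \emph{group} (the paper's principal setting), but for a general character quasigroup one would need an alternative justification---perhaps through the balanced-covering formalism of Corollary~\ref{MetaCov2}, or by exploiting the unique left and right divisions available in any quasigroup to directly identify the image of the term $(y_0 x_1) y_0$ without rewriting it as $y_0(x_1 y_0)$.
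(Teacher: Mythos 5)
Your proof is correct and follows essentially the same route as the paper's: from $S_1\cdot S_0=2S_1$ one gets $S_1y_0=S_1$ as a set, and expanding $(x_0+y_0)(x_1+y_1)y_0=(x_0+y_0)(x_1+y_1)$ forces $x_1y_0\in\{x_1,y_1,y_0x_1,y_0y_1\}$. In fact your write-up is more complete than the paper's one-line argument, since you also derive the second conjunct $y_1y_0=y_0y_1$ of case (2) and explicitly flag the associativity (group versus quasigroup) issue that the paper leaves implicit.
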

\begin{proof}
Since $S_1y_0=S_1$ we have 
$$ (x_0+y_0)(x_1+y_1)q=(x_0+y_0)(x_1+y_1)\,, $$
from which it follows that $x_1y_0\in \{x_1,y_1,y_0x_1,y_0y_1 \}$. 
\end{proof}

\begin{remark}
If $Q$ is a group, case $(0)$ in Lemma~\ref{ThreeLem} is impossible, since it would imply that $x_0=y_0$. 
\end{remark}

\begin{theorem}
Let $G$ be a character group for Dic$_l$, such that $x_1^2,y_1^2\not\in \{x_0,y_0\}$. Then $G\cong \mathbb{Z}/_{2l}\times \mathbb{Z}/_2$ or $D_{2l}$ if $l$ is even, and $\mathbb{Z}/_{4l}$ if $l$ is odd. 
\end{theorem}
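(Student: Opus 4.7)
My plan is to use case (d) together with the group axioms and the multiplication law of Theorem~\ref{DicyclicCov1} to pin down the order of $x_1$ and its commutation relation with $y_0$; once these are fixed, the isomorphism type of $G$ follows by direct identification.

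First I would narrow the structure at level one. By the Remark following Lemma~\ref{ThreeLem0}, case (d) combined with associativity forces $x_1 y_1 = y_1 x_1 \in \{1, y_0\}$, so $y_1 \in \{x_1^{-1},\, y_0 x_1^{-1}\}$. Lemma~\ref{ThreeLem} then restricts $x_1 y_0$ to $\{y_1,\, y_0 x_1,\, y_0 y_1\}$ (option (0) is ruled out in a group), and the choice $x_1 y_0 = x_1^{-1}$ is discarded because it would force $y_0 = x_1^{-2}$, violating case (d). The surviving possibilities split into Type~I, where $x_1 y_0 = y_0 x_1$, and Type~II, where $x_1 y_0 = y_0 x_1^{-1}$. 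In either type, $S_1 = \{x_1^{\pm 1},\, y_0 x_1^{\pm 1}\}$.

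Next I would show by induction on $i$ that $S_i = \{x_1^{\pm i},\, y_0 x_1^{\pm i}\}$ for $1 \le i \le l-1$. The inductive step expands the sixteen products in $S_1 \cdot S_i$ using the type relation from Step~1 and verifies the identity $S_1 \cdot S_i = 2(S_{i-1} + S_{i+1})$ from Theorem~\ref{DicyclicCov1}. The border relation $S_1 \cdot S_{l-1} = 4 S_l + 2 S_{l-2}$ then produces $\{x_1^{\pm l},\, y_0 x_1^{\pm l}\}$ (each with multiplicity two) as the contribution outside $2 S_{l-2}$, and matching this with $4\{x_l,\, y_0 x_l\}$ leaves only Case~A, $x_1^{2l} = 1$, or Case~B, $x_1^{2l} = y_0$. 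Combined with $x_l^2 = y_0^l$, Case~A forces $l$ to be even and Case~B forces $l$ to be odd. Disjointness of the $S_i$ then pins the order of $x_1$ at exactly $2l$ in Case~A and $4l$ in Case~B, and forces $y_0 \notin \langle x_1 \rangle$ in Case~A (otherwise $y_0 = x_1^l = x_l$ would violate $S_0 \cap S_l = \emptyset$).

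Since $|G| = 4l$, the subgroup $\langle x_1, y_0 \rangle$ exhausts $G$, and the four pairings give the following. Type~I/Case~A ($l$ even): $x_1, y_0$ commute with $y_0 \notin \langle x_1 \rangle$, so $G \cong \mathbb{Z}/2l \times \mathbb{Z}/2$. Type~II/Case~A ($l$ even): $y_0$ inverts $x_1$ of order $2l$, so $G \cong D_{2l}$. Type~I/Case~B ($l$ odd): $\langle x_1 \rangle$ has order $4l$ already and contains $y_0 = x_1^{2l}$, so $G \cong \mathbb{Z}/4l$. Type~II/Case~B is impossible, because $y_0 = x_1^{2l} \in \langle x_1 \rangle$ would have to commute with $x_1$, contradicting the inversion relation. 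The main obstacle I anticipate is the careful multiset bookkeeping in the induction of Step~2 and in the $S_{l-1}$-border computation; once the order of $x_1$ and its action on $y_0$ are secured, the identification of $G$ is routine.
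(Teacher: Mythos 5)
Your proof is correct and follows essentially the same route as the paper's: the same induction establishing $S_i=\{x_1^{\pm i},\,y_0x_1^{\pm i}\}$, the identity $x_1^{2l}=x_l^2=y_0^l$ splitting the parity of $l$, and the case analysis via Lemma~\ref{ThreeLem} yielding $\mathbb{Z}/_{2l}\times\mathbb{Z}/_2$, $D_{2l}$, or $\mathbb{Z}/_{4l}$. The only cosmetic difference is that you substitute $y_1=y_0^jx_1^{-1}$ at the outset (via the Remark after Lemma~\ref{ThreeLem0}) and extract $x_1^{2l}$ from the border product $S_1\cdot S_{l-1}$, whereas the paper carries $y_1$ as a separate generator into the final presentations and reads $x_1^{2l}=y_0^l$ directly from $S_l=x_1^l+y_1^l$.
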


\begin{proof}
First note that since $S_2=(x_0+y_0)(x_1^2+y_1^2)$, we can prove via induction that $S_i=(x_0+y_0)(x_1^i+y_1^i)$ for each $1\leq i<l$ and $S_l=x_1^l+y_1^l$. Indeed, by \eqref{DicyclicEq}, we have $S_1\cdot S_i = S_{i-1}+S_{i+1}$ and 
\begin{align*}
S_1 \cdot S_i &= (e+q)(x_1+y_1)(e+q)(x_1^i+y_1^i) \\
&= 2(e+q)(x_1+y_1)(x_1^i+y_1^i) \\
&= 2(e+q)(x_1^{i+1}+x_1y_1^i+y_1x_1^i+y_1^{i+1}) \\
&= 2(e+q)(x_1^{i-1}+y_1^{i-1}+x_1^{i+1}+y_1^{i+1}) \,.
\end{align*}
This immediately implies that $x_1^{2l}=x_l^2=y_0^l$, meaning if $l$ is odd then $G$ is necessarily cyclic, so from this point on we will assume that $l$ is even.  

By Lemma~\ref{ThreeLem} we have three possibilities to check. 
\begin{itemize}
\item
If $y_1=x_1y_0$, then $y_1^2=y_1x_1y_0\in \set{x_0,y_0}$, which contradicts our assumption. 
\item
If $x_1y_0=y_0x_1$ and $y_1y_0=y_0y_1$, then $G$ has presentation 
$$ \braket{x_1,y_1,y_0 \ | \ x_1^{2l}=y_1^{2l}=y_0^2=1, x_1y_0=y_0x_1, y_1y_0=y_0y_1, x_1y_1=y_1x_1=y_0^i} $$
where $i\in \set{0,1}$.  In either case this group is isomorphic to $\mathbb{Z}/_{2l} \oplus \mathbb{Z}/_2$. 
\item
Finally if $x_1y_0=y_0y_1$ then $G$ has presentation 
\begin{align*}
&\braket{x_1,y_1,y_0 \ | \ x_1^{2l}=y_1^{2l}=y_0^2=1, x_1y_0=y_0y_1, x_1y_1=y_1x_1=y_0^i } \\
 &\rule{40mm}{0mm}
 = \braket{x_1,y_0 \ | \ x_1^{2l}=y_0^2=1, y_0^{-1}x_1y_0=x_1^{-1}y_0^i}
\end{align*}
where $i\in \set{0,1}$. Clearly if $i=0$ this is the dihedral group $D_{2l}$, and if $i=1$ then $x_1^2=y_0$, leading to the contradiction $G\cong \mathbb{Z}/_4$. 
\end{itemize}
This completes the proof.
\end{proof}

\begin{theorem}
If $l$ is odd, then no character group of Dic$_l$ satisfies condition $(b)$ of Lemma~\ref{ThreeLem0}. and if $l$ is even with character group $G$ such that $x_1^2=e, x_1y_1=y_0$ and $y_1^2\not\in \set{x_0,y_0}$, then $G\cong D_{2l}$. 
\end{theorem}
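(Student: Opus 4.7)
The plan is to deduce both halves of the theorem from one structural fact: in case~(b), the character group $G$ must coincide with the subgroup $\langle x_1, y_0 \rangle$, and this subgroup is necessarily dihedral of order $4l$.

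I would begin by applying the remark after Lemma~\ref{ThreeLem0}: in case~(b), the group axioms force $x_1^2 = x_0 = e$ and $x_1 y_1 = y_0$, whence $y_1 = x_1 y_0$. This automatically places us in case~(1) of Lemma~\ref{ThreeLem}. Setting $H := \langle x_1, y_0 \rangle$, both generators of $H$ are involutions and so $H$ is dihedral with rotation $y_1 = x_1 y_0$, and $|H| = 2\,|y_1|$.

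Next, I would show $G = H$ by induction on $i$, proving $S_i \subseteq H$ for $0 \leq i \leq l$. The bases $S_0 = \{e, y_0\}$ and $S_1 = \{x_1, y_1, y_0 x_1, y_0 y_1\}$ lie manifestly in $H$. For the inductive step I appeal to the instances
$$
S_1 \cdot S_1 = 4 S_0 + 2 S_2, \qquad S_1 \cdot S_i = 2 S_{i-1} + 2 S_{i+1}\ (2 \leq i \leq l-2), \qquad S_1 \cdot S_{l-1} = 2 S_{l-2} + 4 S_l
$$
of \eqref{DicyclicEq}: since the left-hand side is a product of multisets whose supports lie in $H$ as soon as the earlier $S_j$'s do, the right-hand side identifies each new $S_{i+1}$ (respectively $S_l$) as a multisubset of $H$. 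Since $G = \bigsqcup_{i=0}^l S_i$, this forces $G \subseteq H$, hence $G = H$. Comparing $|G| = 4l$ to $|H| = 2\,|y_1|$ gives $|y_1| = 2l$, so $G \cong D_{2l}$.

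The parity split now follows from the arithmetic of $D_{2l}$. When $l$ is odd, Theorem~\ref{DicyclicCov1}(a) requires $\langle S_0, S_l \rangle \cong \mathbb{Z}/_4$, which exhibits $x_l$ as an element of order $4$ in $G$. But in $D_{2l}$ with $l$ odd, every rotation $y_1^k$ has order $2l/\gcd(2l,k)$, which cannot equal $4$ because $2l \equiv 2 \pmod{4}$, and every reflection has order $2$. Hence $D_{2l}$ admits no order-$4$ element, contradicting the existence of $x_l$ and ruling out case~(b) entirely for $l$ odd. When $l$ is even, the required $\langle S_0, S_l \rangle \cong \mathbb{Z}/_2 \oplus \mathbb{Z}/_2$ is realized inside $D_{2l}$ (for instance by $\{e, y_0, y_1^l, y_0 y_1^l\}$), with no obstruction, and we conclude $G \cong D_{2l}$.

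The main obstacle I anticipate is the inductive containment $S_i \subseteq H$: the identities from \eqref{DicyclicEq} are equalities of multisubsets of $G$, and one has to argue carefully that the set-theoretic membership of each $S_{i+1}$ really can be read off from the $\mathbb{N}$-linear combination on the right once the earlier $S_j$'s are known to lie in $H$. Once this containment is secured, the remaining steps are elementary dihedral-group arithmetic.
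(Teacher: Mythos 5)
Your proof is correct, and while it shares the paper's overall skeleton (reduce via the remark to $x_1^2=e$, $x_1y_1=y_0$, hence $y_1=x_1y_0$; induct over the chain $S_0,S_1,\dots,S_l$ using \eqref{DicyclicEq}; then exploit the structure of $\langle S_0,S_l\rangle$), it diverges in two substantive ways. First, the paper's induction proves the \emph{explicit} formula $S_i=(x_0+y_0)(y_1^{i-1}x_1+y_1^i)$ and $S_l=y_1^{l-1}x_1+y_1^l$, whereas you prove only the containment $S_i\subseteq H=\langle x_1,y_0\rangle$ and then recover the isomorphism type from the order count $|G|=4l=2\,|x_1y_0|$. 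Your containment step is sound: the coefficients on the right of $S_1\cdot S_i=2S_{i-1}+2S_{i+1}$ are strictly positive, so every element of $S_{i+1}$ occurs in the set product $S_1S_i\subseteq H$; together with $x_1\neq y_0$ being distinct involutions this pins down $H\cong D_{2l}$. This is arguably cleaner than the paper's ending, which writes the presentation $\langle x_1,y_0\mid x_1^2=y_0^2=1,(x_1y_0)^{2l}=1\rangle$ and asserts it "yields $D_{2l}$" --- strictly that relation only bounds the rotation's order, and one must still invoke $|G|=4l$ as you do. Second, for the parity obstruction the paper computes directly that $y_0^l=(y_1^{l-1}x_1)^2=x_1(y_0x_1)^{l-1}(x_1y_0)^{l-1}x_1=1$ by telescoping, forcing $l$ even; you instead observe that for $l$ odd the requirement $\langle S_0,S_l\rangle\cong\mathbb{Z}/_4$ produces an element of order $4$, which $D_{2l}$ with $2l\equiv 2\pmod 4$ cannot contain. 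Both are valid; yours trades an explicit word computation for a standard order-of-elements argument, at the cost of needing the full identification $G\cong D_{2l}$ before the parity split rather than after it. The closing remark about realizability for $l$ even is harmless but not needed, since the theorem only asserts the isomorphism type conditional on $G$ being a character group.
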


\begin{proof}
Note that in this case $S_2=(x_0+y_0)(y_1x_1+y_1^2)$. Using an induction argument similar to that of the previous theorem, we can show that $S_i=(x_0+y_0)(y_1^{i-1}x_1+y_1^i)$ for $1\leq i<l$ and $S_l=y_1^{l-1}x_1+y_1^l$. So we have $y_1^{2l}=y_0^l$ and $(y_1^{l-1}x_1)^2=y_0^l$, and 
$$ y_0^l = y_1^{l-1}x_1y_1^{l-1}x_1 = (x_1y_0)^{l-1}x_1(x_1y_0)^{l-1}x_1 = x_1(y_0x_1)^{l-1}(x_1y_0)^{l-1}x_1=1\,, $$
which shows that $l$ must be even. In this case $G$ has presentation 
$$ \braket{x_1,y_0 \ | \ x_1^2=y_0^2=1, (x_1y_0)^{2l}=1} $$
which yields $D_{2l}$.
\end{proof}

\begin{theorem}
Let $G$ be a character group for Dic$_l$, such that $x_1^2,y_1^2\in \set{e,q}$. Then $l$ is either $1$ or $2$ modulo $4$, and $G\cong\braket{x,y \ | \ x^4=y^2=(xy)^l=1, x^2y=yx^2 }$.
\end{theorem}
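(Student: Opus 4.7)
The plan is to use Lemma~\ref{ThreeLem} to analyse how $x_1, y_1$ interact with $y_0 = q$, induct on $j$ to determine the remaining $x_j, y_j$, and then read off the presentation by inspecting the structural constraints at $S_l$.

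First, I would eliminate all but one subcase of Lemma~\ref{ThreeLem} under the hypothesis $x_1^2, y_1^2 \in \{e, q\}$. Subcase $(0)$ is excluded by the remark after Lemma~\ref{ThreeLem}. In subcase $(1)$, $y_1 = x_1 q$, so $y_1^2 = (x_1q)^2$; both $y_1^2 = e$ and $y_1^2 = q$ force $q$ to commute with $x_1$, whence $\{x_1, y_1, qx_1, qy_1\}$ collapses to fewer than four elements, contradicting $|S_1| = r_1^2 = 4$. In subcase $(3)$, $y_1 = qx_1q$ forces $y_1^2 = qx_1^2q$; the case $x_1^2 = q$ yields $y_1 = x_1$ and the case $x_1^2 = e$ yields $y_1 x_1 = (x_1 y_1)^{-1}$, which via the inductive computation of $x_l$ (see below) contradicts $x_l^2 = q^l$ for $l$ odd and $x_l \neq y_l$ for $l$ even. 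Hence we must be in subcase $(2)$, where $q$ commutes with $x_1, y_1$ and is central in $G$. Among the four possibilities for $(x_1^2, y_1^2)\in\{e,q\}^2$, the diagonal cases lead to $x_l = y_l$ (contradicting $|S_l|=2$), and the two off-diagonal cases are symmetric under $x_1\leftrightarrow y_1$; without loss of generality take $x_1^2 = q$ and $y_1^2 = e$. Then $q = x_1^2 \in \langle x_1\rangle$, so $G = \langle x_1, y_1\rangle$ with $x_1$ of order $4$, $y_1$ an involution, and $x_1^2$ central.

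Setting $t = x_1y_1$, the defining relations give the conjugation rules $x_1 t x_1^{-1} = qt^{-1}$ and $y_1 t y_1^{-1} = t^{-1}$. By induction on $j$ via $S_1 \cdot S_j = 2S_{j+1} + 2S_{j-1}$ (for $1 < j < l-1$), one identifies $S_j = (e+q)(x_j + y_j)$ where $x_j, y_j$ are the two length-$j$ alternating words in $x_1, y_1$ starting with $x_1$ and $y_1$ respectively. For the terminal index $j = l$ this yields $x_l = t^{l/2}$, $y_l = t^{-l/2}q^{l/2}$ for $l$ even, and $x_l = t^{(l-1)/2}x_1$, $y_l = t^{-(l-1)/2}q^{(l-1)/2}y_1$ for $l$ odd.

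Finally, the constraint from Theorem~\ref{DicyclicCov1}(a)---namely $\langle S_0, S_l\rangle \cong \mathbb{Z}/2\oplus\mathbb{Z}/2$ for $l$ even and $\cong \mathbb{Z}/4$ for $l$ odd---requires $x_l^2 = q^l$ together with $x_ly_l = q$ (respectively $x_ly_l = e$). A direct calculation with the conjugation rules gives $x_l^2 = t^l$, $x_ly_l = q^{l/2}$ in the even case and $x_l^2 = q^{(l+1)/2}$, $x_ly_l = q^{l-1}t^l$ in the odd case. Enforcing the constraint reduces to $t^l = e$ together with $q^{l/2} = q$ for $l$ even (so $l \equiv 2\pmod 4$), and to $q^{(l+1)/2} = q$ together with $t^l = e$ for $l$ odd (so $l \equiv 1\pmod 4$). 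The resulting relations on $x_1, y_1$ are precisely those of the target presentation, yielding $G \cong \langle x, y \mid x^4 = y^2 = (xy)^l = 1, x^2y = yx^2\rangle$. The main obstacle is the layered case analysis combined with the inductive bookkeeping of the $x_j, y_j$; the conjugation identities must be tracked carefully so that the central factor $q$ and the half-integer exponents do not drift.
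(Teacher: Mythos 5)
Your strategy is the same as the paper's: use Lemma~\ref{ThreeLem} to reduce to the case where $q=y_0$ is central, identify $x_j,y_j$ inductively with the alternating words in $x_1,y_1$, and read off $l\bmod 4$ and the presentation from the constraints at $S_l$. Your eliminations of subcases $(0)$, $(1)$, $(3)$ and your even-$l$ analysis (where $x_ly_l=q^{\frac{l}{2}(i+j)}$ with $x_1^2=q^i$, $y_1^2=q^j$ correctly forces $i\neq j$ and $\tfrac{l}{2}$ odd) are sound and agree with the paper. A small slip: with $x_1^2=q$, $y_1^2=e$ one has $y_1ty_1^{-1}=y_1x_1=qt^{-1}$, not $t^{-1}$; your later formulas use the correct $y_1x_1=qt^{-1}$, so this is only a typo.

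The genuine gap is in the odd-$l$ case, and it is the same gap as in the paper's own proof. Collapsing the word $x_l^2=\bigl((x_1y_1)^{(l-1)/2}x_1\bigr)^2$ from the middle, the cancellations contribute $q^i$ and $q^j$ alternately, $\tfrac{l+1}{2}$ of the former and $\tfrac{l-1}{2}$ of the latter, so $x_l^2=q^{\frac{l+1}{2}i+\frac{l-1}{2}j}$ and symmetrically $y_l^2=q^{\frac{l+1}{2}j+\frac{l-1}{2}i}$ (the paper's exponent $\tfrac{l+1}{2}(i+j)$ is wrong by $q^j$). Since condition (a) of Theorem~\ref{DicyclicCov1} forces $x_l^2=y_l^2=q$, adding the two exponent congruences gives $l(i+j)\equiv 0\pmod 2$, hence $i=j$, and then $li=i\equiv 1$, hence $i=j=1$. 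So for odd $l$ it is exactly the \emph{diagonal} case that survives: your claim that it yields $x_l=y_l$ fails there, because $x_ly_l=t^l$ only imposes the relation $t^l=e$, and one lands in $\langle x_1,t\mid x_1^4=t^l=1,\ x_1tx_1^{-1}=t^{-1}\rangle\cong\mathrm{Dic}_l$ for \emph{every} odd $l$. This is realized: in $\mathrm{Dic}_3=\langle\alpha,\beta\mid\alpha^6=1,\beta^2=\alpha^3,\beta^{-1}\alpha\beta=\alpha^{-1}\rangle$ the partition $\{e\}$, $\{\alpha^3\}$, $\{\alpha^2\beta\}$, $\{\alpha^5\beta\}$, $S_1=\{\beta,\alpha\beta,\alpha^3\beta,\alpha^4\beta\}$, $S_2=\{\alpha,\alpha^2,\alpha^4,\alpha^5\}$ is a balanced cover with $x_1=\beta$, $y_1=\alpha\beta$ and $x_1^2=y_1^2=\alpha^3=q$, although $3\equiv 3\pmod 4$. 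Conversely, your off-diagonal derivation of $l\equiv 1\pmod 4$ uses only $x_l^2=q$ and omits the companion constraint $y_l=qx_l$; imposing both gives $t^l=e$ and $t^l=q^{(l+1)/2}$ simultaneously, so that case is empty. A final sanity check confirming the trouble: in the target presentation one computes $xsx^{-1}=s^{-1}x^2$ for $s=xy$, so $1=xs^lx^{-1}=s^{-l}x^{2l}=x^{2l}$, which for odd $l$ forces $x^2=1$ and collapses the group to $D_l$ of order $2l$ --- too small to cover $\mathrm{Dic}_l$. In short, you have faithfully reproduced the paper's argument, including the step that does not hold; the even case stands, but the odd case needs to be reworked (and the statement itself corrected there).
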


\begin{proof}
Using a similar inductive procedure as before, one can show that: 
$$ x_i=\begin{cases} (x_1y_1)^{\tfrac{i}{2}} & \text{ if $i$ is even}; \\
(x_1y_1)^{\tfrac{i-1}{2}}x_1 & \text{ if $i$ is odd};
\end{cases} 
\quad
\mbox{and}
\quad
y_i = \begin{cases}  
(y_1x_1)^{\tfrac{i}{2}} & \text{ if $i$ is even}; \\
(y_1x_1)^{\tfrac{i-1}{2}}y_1 & \text{ if $i$ is odd}. \\
\end{cases}
 $$
By Lemma~\ref{ThreeLem}, we have three cases to consider. 
\begin{itemize}
\item
First if $x_1y_0=y_1$ then 
$$ x_1y_1=x_1^2y_0\in \set{x_0,y_0}\,, $$
which cannot happen. 
\item
If $x_1y_0=y_0y_1$, then $y_1^2=y_0x_1^2y_0=x_1^2$, and moreover both of $x_1^2$ and $y_1^2$ must be $1$, since otherwise
$$ y_1=y_0x_1y_0=y_0y_0^2y_0=1\,. $$
If $l$ is even, then 
$$ y_0 = x_ly_l = (x_1y_1)^{\frac{l}{2}}(y_1x_1)^{\frac{l}{2}}=1\,, $$
a clear contradiction; and if $l$ is odd then 
$$ y_0 = x_l^2 = (x_1y_1)^{\frac{l-1}{2}}x_1 (x_1y_1)^{\frac{l-1}{2}}x_1 = 1\,, $$
again a clear contradiction. 
\item
Finally assume that $x_1y_0=y_0x_1$ and $y_1y_0=y_0y_1$, and moreover first suppose that $l$ is even, so that 
$x_l=(x_1y_1)^{\tfrac{l}{2}}$
and
$y_l=(y_1x_1)^{\tfrac{l}{2}}$.
This  implies that 
 $$ y_0=x_ly_l=(x_1y_1)^{\frac{l}{2}}(y_1x_1)^{\frac{l}{2}}=q^{\frac{l}{2}(i+j)}\,, $$
 which means that 
 $\tfrac{l}{2}(i+j)$
 must be $1$ modulo $2$. This is impossible if $l$ is $0$ modulo $4$, so we conclude that $l$ must be $2$ modulo $4$. We can also conclude that $i\neq j$, thus without loss of generality we may assume that $i=1$ and $j=0$. Therefore, the presentation for $G$ becomes 
  \begin{align*}
 &\braket{x_1,y_1,y_0 \ | \ x_1^2=y_0, y_1^2=y_0^2=1, \ x_1y_0=y_0x_1, \ y_1y_0=y_0y_1, \ (x_1y_1)^l=1 } \\
 &\rule{50mm}{0mm}
  =\braket{x_1,y_1 \ | \ x_1^4=y_1^2=(x_1y_1)^l=1, x_1^2y_1=y_1x_1^2}\,.
 \end{align*}
  On the other hand, if $l$ is odd, then 
 $x_l=(x_1y_1)^{\tfrac{l-1}{2}}x_1$ 
 and
$y_1=(y_1x_1)^{\tfrac{l-1}{2}}y_1$. 
Therefore 
 $$ y_0= x_l^2 = ((x_1y_1)^{\frac{l-1}{2}}x_1)^2 = y_0^{\frac{l+1}{2}(i+j)}\,, $$
which is impossible if $l$ is $3$ modulo $4$ or $i=j$, and so the same conclusion follows.
\end{itemize}
This completes the proof. 
\end{proof}

\subsection{Split metacyclic groups}

A metacyclic group is split when $l=0$, in this case being nothing more than a semidirect product of two cyclic groups. We simply have 
$e_{a,\alpha}=q_{a,\alpha}=\alpha$
 for all $a\in I$. Therefore \eqref{GammaCond} reduces to $\alpha+\beta\equiv \gamma \mod \gcd(s_a,s_b)$.  We will set 
$$ G=:\mathbb{Z}/_n \rtimes \mathbb{Z}/_m=\braket{x,y \ | \ y^m=1, b^n=1, x^y=x^k}  $$
throughout this section.

\begin{theorem}\label{MetCovClas}
Let $Q$ be a quasigroup of order $nm$. If $Q$ has an element $q$ of order $m$, and disjoint subsets $T_a$ of size $r_a$ for each $a\in I$ such that
$qT_a=T_aq$ 
 and:
 \begin{enumerate}
\item $\bigcup_{\gamma=1}^m \bigcup_{a\in I} q^\gamma T_a = Q$\,;
\item $T_a \cdot T_b = \sum_{c\in I} S(a,b,c) T_c$
for all $a,b\in I$\,,
\end{enumerate}
then $Q$ is a balanced cover of $G$. 
\end{theorem}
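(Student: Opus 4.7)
The plan is to construct sets $S_{a,\alpha}$ of size $r_a^2$ satisfying the hypotheses of Corollary~\ref{MetaCov2} directly from the data $(q, \{T_a\})$ given in the statement. Guided by the observation after Corollary~\ref{MetaCov2} that $S_{b,\beta}$ should decompose as a sum of $r_b$ translates of a set of size $r_b$, and by the role of $q$ as a stand-in for $y$, I would set
$$
S_{a,\alpha} := \bigsqcup_{j=0}^{r_a-1} q^{\alpha + j s_a}\, T_a
\quad\text{for } a\in I,\ 0\le \alpha < s_a.
$$
The first step is to verify that these are $nm$ elements in total distributed into disjoint subsets of the correct sizes. Condition~(1) plus the count $|Q|=nm=m\sum_{a\in I} r_a$ forces the sets $q^\gamma T_a$ to be pairwise disjoint as $\gamma$ and $a$ vary. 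Since the exponents $\alpha + j s_a$ (for $0\le \alpha < s_a$, $0\le j<r_a$) exhaust $\mathbb{Z}/m\mathbb{Z}$ exactly once, the $S_{a,\alpha}$ partition $Q$ and $|S_{a,\alpha}|=r_a\cdot r_a = r_a^2$.

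The main step is to verify the product formula of Corollary~\ref{MetaCov2}. Using $qT_a = T_a q$ and condition~(2), the left-hand side expands as
$$
S_{a,\alpha}\cdot S_{b,\beta}
= \sum_{j_1=0}^{r_a-1}\sum_{j_2=0}^{r_b-1} q^{\alpha+\beta+j_1 s_a + j_2 s_b}\,T_a T_b.
$$
The image of $(j_1,j_2)\mapsto j_1 s_a + j_2 s_b$ in $\mathbb{Z}/m\mathbb{Z}$ is the subgroup $\gcd(s_a,s_b)\mathbb{Z}/m\mathbb{Z}$ of order $\operatorname{lcm}(r_a,r_b)$, and each fibre has size $\gcd(r_a,r_b)$, so the sum collapses to $\gcd(r_a,r_b)\sum_{e\in E} q^e\, T_a T_b$, where $E$ denotes the set of residues $e\in[0,m)$ with $e\equiv \alpha+\beta \pmod{\gcd(s_a,s_b)}$.

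For the right-hand side, I would first note that whenever $S(a,b,c)\neq 0$ the containment $K_a\cap K_b \subseteq K_c$ forces $\gcd(s_a,s_b)\mid s_c$; hence the $\gamma$'s satisfying \eqref{GammaCond} parametrize exactly those $q$-exponents lying in $E$, and $\sum_\gamma S_{c,\gamma} = \sum_{e\in E} q^e T_c$. After factoring $q^e$ out, what remains to sum is $\sum_{i=0}^{r_b-1} T_{a+bk^i}/r_{a+bk^i}$, which collecting by orbit equals $\frac{1}{r_a}\sum_{c\in I} S(a,b,c)T_c$ (this is the identity already established in the derivation of \eqref{ProdCoeff2}), and thus equals $\frac{1}{r_a}T_a T_b$ by hypothesis~(2). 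Substituting gives $\gcd(r_a,r_b)\sum_{e\in E} q^e T_a T_b$, matching the left-hand side exactly; then Corollary~\ref{MetaCov2} concludes that $Q$ is a balanced cover of $G$.

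The only point of real care is the book-keeping needed to ensure that the $\gamma$-summation on the right-hand side of \eqref{MetCovReq2} lines up with the $q$-exponents appearing on the left; this is where the divisibility $\gcd(s_a,s_b)\mid s_c$ (which ultimately traces back to Lemma~\ref{RootOf1Lem}) is essential. Once that alignment is in place the remaining computation is purely a matter of reordering sums and invoking $qT_a=T_a q$ together with condition~(2).
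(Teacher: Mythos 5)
Your construction $S_{a,\alpha}=\bigsqcup_{j=0}^{r_a-1}q^{\alpha+js_a}T_a$ is exactly the one the paper uses (with $q^j$ in place of its $q_j$), so this is essentially the same approach; in fact the paper's proof stops after stating this definition, whereas you carry out the verification in full. Your verification is correct --- the counting argument forcing disjointness of the $q^\gamma T_a$, the fibre size $\gcd(r_a,r_b)$ of $(j_1,j_2)\mapsto j_1s_a+j_2s_b$, and the divisibility $\gcd(s_a,s_b)\mid s_c$ when $S(a,b,c)\neq 0$ are all the right ingredients --- though, as with the paper's own formulation, the rearrangement $q^{\alpha+j_1s_a}T_a\cdot q^{\beta+j_2s_b}T_b=q^{\alpha+\beta+j_1s_a+j_2s_b}\,T_aT_b$ tacitly uses associativity, which deserves a remark when $Q$ is only assumed to be a quasigroup.
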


\begin{proof}
Suppose that we are given elements $\Set{q_0,\dots, q_{m-1}}$ and disjoint subsets $T_a$ of size $r_a$ for each $a\in I$ which satisfy conditions $(1)$ and $(2)$. We define 
$$S_{a,\alpha}=\left(\sum_{i=0}^{r_a-1} q_{s_ai+\alpha}\right)T_a$$ 
for each $a\in I$ and $0\leq \alpha <r_a-1$.
\end{proof}

\begin{corollary}
The group 
$\mathbb{Z}/_n\rtimes_\sigma \mathbb{Z}/_m$
acts as a balanced cover of 
$\mathbb{Z}/_n\rtimes \mathbb{Z}/_m$
for any group homomorphism
$\sigma\colon\mathbb{Z}/_m\to\mathrm{Aut}(\mathbb{Z}/_n)$. 
\end{corollary}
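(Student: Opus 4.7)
My plan is to invoke Theorem~\ref{MetCovClas} with a concrete choice of witnesses in $Q := \mathbb{Z}/_n \rtimes_\sigma \mathbb{Z}/_m$. Let $\bar{x}, \bar{y}$ denote the standard generators of $Q$, with $\bar{y}^{-1}\bar{x}\bar{y} = \bar{x}^{k'}$ for the unit $k' \in (\mathbb{Z}/_n)^\times$ specified by $\sigma$. I will take $q := \bar{y}$, which has order $m$, and $T_a := \{\bar{x}^s : s \in Ka\}$ for each $a \in I$, using the $\langle k \rangle$-orbits $Ka$ attached to $G$'s own action (not $\sigma$'s). Each $T_a$ has size $r_a$, the $T_a$ are pairwise disjoint, and together they exhaust the normal subgroup $\langle \bar{x}\rangle \cong \mathbb{Z}/_n$ of $Q$.

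Conditions~(1) and~(2) of Theorem~\ref{MetCovClas} then follow cleanly. Since $\{\bar{y}^\gamma\}_{\gamma=0}^{m-1}$ is a transversal of $\langle\bar{x}\rangle$ in $Q$, the union $\bigcup_{\gamma, a} q^\gamma T_a$ is a disjoint partition of $Q$, giving~(1). For~(2), the product $T_a T_b$ in $\mathbb{N}Q$ equals $\sum_{s \in Ka,\, r \in Kb} \bar{x}^{s+r}$ since the elements of $\langle \bar{x}\rangle$ commute, and the multiplicity of $\bar{x}^t$ is exactly $|\{(s,r) \in Ka \times Kb : s+r \equiv t \bmod n\}|$, which depends only on the $\langle k \rangle$-orbit $Kc$ containing $t$ by the identity $S(a,b,c) = S(a,b,ck^j)$ recalled in Section~\ref{SS:ProdChar}. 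Collecting terms yields $T_a T_b = \sum_{c \in I} S(a,b,c)\, T_c$.

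The remaining, and most delicate, hypothesis is the commutation $q T_a = T_a q$ in $\mathbb{N}Q$. Moving $\bar{y}$ past powers of $\bar{x}$ via $\bar{y}\bar{x}^s = \bar{x}^{s(k')^{-1}}\bar{y}$ reduces this to the set-level identity $k' \cdot Ka = Ka$ for every $a \in I$; that is, the unit $k'$ encoding $\sigma$ must preserve each $\langle k \rangle$-orbit of $\mathbb{Z}/_n$. Verifying this will be the main obstacle of the argument, since $\sigma$ is arbitrary and $k'$ need not lie in $\langle k \rangle$. I anticipate establishing it by exploiting that both $k^m$ and $(k')^m$ are congruent to $1$ modulo $n$ to place $\langle k \rangle$ and $\langle k' \rangle$ inside a common cyclic subgroup of $(\mathbb{Z}/_n)^\times$ and arguing that $k'$ must then stabilize each $\langle k \rangle$-orbit setwise; should that structural claim prove too strong, the fallback is to replace each $T_a$ by the corresponding $\langle k, k'\rangle$-orbit and rebalance the multiplicities by a compensating factor so that both~(2) and the commutation survive. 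Once the commutation is secured, Theorem~\ref{MetCovClas} applies directly and delivers the desired balanced cover.
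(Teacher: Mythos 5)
Your choice of witnesses ($q=\bar y$, $T_a=\{\bar x^s: s\in Ka\}$) is exactly the paper's, and your verification of conditions $(1)$ and $(2)$ of Theorem~\ref{MetCovClas} is correct and more detailed than the paper's. The problem is the commutation hypothesis $qT_a=T_aq$, which you rightly identify as the crux and then leave unproven. Your reduction of it to the statement that the unit $k'$ defining $\sigma$ preserves each $\langle k\rangle$-orbit setwise is accurate, but the argument you sketch for that statement fails: the congruences $k^m\equiv(k')^m\equiv1\bmod n$ do not place $k$ and $k'$ in a common cyclic subgroup of $(\mathbb{Z}/_n)^\times$, and the orbit-stabilization claim is false in general. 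For instance, with $n=8$, $m=2$, $k=3$, $k'=5$, one has $K1=\{1,3\}$ but $5\cdot1=5\notin K1$, even though $3^2\equiv5^2\equiv1\bmod 8$; here $(\mathbb{Z}/_8)^\times$ is a Klein four-group, so no cyclic subgroup contains both units. The fallback you mention --- replacing $T_a$ by the $\langle k,k'\rangle$-orbit --- destroys the requirement $|T_a|=r_a$ and with it condition $(2)$, since the coefficients $S(a,b,c)$ are counted with respect to $\langle k\rangle$-orbits.

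The paper sidesteps this entirely by writing $\sigma\colon1\mapsto[1\mapsto k^b]$, i.e., by taking $k'$ to be a power of $k$, in which case $k'$ trivially permutes each orbit $Ka$ and $qT_a=T_aq$ is immediate. You have in effect put your finger on a real tension between the corollary's ``for any $\sigma$'' and the one-line proof the paper gives; but as submitted your argument does not establish the commutation for any class of $\sigma$ at all, so the proof is incomplete. To match the paper you should restrict to (or argue that it suffices to consider) $\sigma$ with image in $\langle [1\mapsto k]\rangle$, at which point the commutation is a one-line check and Theorem~\ref{MetCovClas} applies as you intend.
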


\begin{proof}
Suppose $\sigma\colon1\mapsto[1\mapsto k^b]$ for some $b$. Define 
$$ T_a= \Set{(ak^i,0): 0\leq i<r_a} $$
for each $a\in I$, and $q_j=(0,j)$ for each $0\leq j<m-1$. Then the conditions $(1)$ and $(2)$ of Theorem~\ref{MetCovClas} are clearly satisfied. 
\end{proof}

\end{document}